\newtheorem{theorem}{Theorem}[section]
\newtheorem{lemma}{Lemma}[section]
\newtheorem{corollary}{Corollary}[section]
\newtheorem{remark}{Remark}[section]
\newtheorem{example}{Example}[section]
\DeclareMathOperator*{\Null}{null}
\DeclareMathOperator*{\Range}{range}
\DeclareMathOperator*{\rank}{rank}
\DeclareMathOperator*{\diag}{diag}
\begin{document}

\title{On the Ideal Interpolation Operator in Algebraic Multigrid Methods}
\author{Xuefeng Xu%
\thanks{Corresponding author. LSEC, Institute of Computational Mathematics and Scientific/Engineering Computing, Academy of Mathematics and Systems Science, Chinese Academy of Sciences, Beijing 100190, China; School of Mathematical Sciences, University of Chinese Academy of Sciences, Beijing 100049, China (\texttt{xuxuefeng@lsec.cc.ac.cn}).} 
\quad and \quad 
Chen-Song Zhang%
\thanks{LSEC \& NCMIS, Academy of Mathematics and Systems Science, Chinese Academy of Sciences, Beijing 100190, China; School of Mathematical Sciences, University of Chinese Academy of Sciences, Beijing 100049, China (\texttt{zhangcs@lsec.cc.ac.cn).}}}
\date{\today}
\maketitle

\begin{abstract}
	
Various algebraic multigrid algorithms have been developed for solving problems in scientific and engineering computation over the past decades. They have been shown to be well-suited for solving discretized partial differential equations on unstructured girds in practice. One key ingredient of algebraic multigrid algorithms is a strategy for constructing an effective prolongation operator. Among many questions on constructing a prolongation, an important question is how to evaluate its quality. In this paper, we establish new characterizations (including sufficient condition, necessary condition, and equivalent condition) of the so-called \emph{ideal} interpolation operator. Our result suggests that, compared with common wisdom, one has more room to construct an ideal interpolation, which can provide new insights for designing algebraic multigrid algorithms. Moreover, we derive a new expression for a class of ideal interpolation operators.

\end{abstract}
	
\noindent{\bf Keywords:} algebraic multigrid, ideal interpolation, coarsening

\medskip

\noindent{\bf AMS subject classifications:} 65F10, 65F15, 65N55

\section{Introduction}

Numerical method for solving large-scale systems of equations arising from the discretization of partial differential equations (PDEs) is an active topic of research over the past decades (see, e.g.,~\cite{Saad2003,Hackbusch2016}). Classical iterative methods, like Jacobi and Gauss--Seidel, tend to converge slowly for large-scale problems, because low-frequency (i.e., smooth) error components are attenuated very slowly by these classical methods in general. For the linear systems arising from finite element and finite difference discretizations of elliptic boundary value problems, local relaxation methods are typically effective to eliminate the high-frequency (i.e., oscillatory) error components, while the low-frequency parts cannot be eliminated effectively. The main idea of multigrid methods is to project the error obtained from local relaxation processes onto a coarser grid, which will yield a relatively smaller system. More importantly, part of slowly converging low-frequency error components on fine-grid will become high-frequency on coarse-grid and therefore can be further eliminated via local relaxation methods~\cite{Trottenberg2001}. By applying this process recursively, one can obtain a multilevel iterative method. Multigrid methods have been proved to possess uniform convergence with (nearly) optimal complexity for a large class of linear algebraic systems arising from the discretization of PDEs (see, e.g.,~\cite{Xu1992,Trottenberg2001,Vassilevski2008}).

Algebraic multigrid (AMG) was originally developed as a method for solving general matrix equations based on multigrid principles~\cite{Brandt1985,Ruge1985,Brandt1986,Ruge1987}. AMG constructs the coarsening process in a purely algebraic manner that requires no explicit knowledge of the geometric properties. More specifically, AMG determines inter-level transfer operators (restriction and prolongation) and coarse-level equations based only on the matrix entries; see the recent survey by Xu and Zikatanov~\cite{Xu2017}. AMG algorithms have gained increasing popularity among scientific and engineering computation due to successful applications to solve physical problems on unstructured grids~\cite{Cleary2000}.
An important ingredient of AMG algorithms is a strategy for constructing inter-level operators. When designing the prolongation operator in an AMG algorithm, it is desirable to be able to know its convergence quality a priori. To measure the quality of the coarse-grid in AMG, Falgout and Vassilevski~\cite{Falgout2004} studied the min-max property of the following measure:
\begin{equation}\label{g_meas}
\mu_{X}(Q,\mathbf{e}):=\frac{\big(X(I-Q)\mathbf{e},(I-Q)\mathbf{e}\big)}{(A\mathbf{e},\mathbf{e})}, \quad \forall\, \mathbf{e}\in\mathbb{R}^{n}\backslash\{0\},
\end{equation}
where both $A\in\mathbb{R}^{n\times n}$ and $X\in\mathbb{R}^{n\times n}$ are symmetric positive definite (SPD) and $Q=PR\in\mathbb{R}^{n\times n}$ (here $P\in\mathbb{R}^{n\times n_{c}}$, $R\in\mathbb{R}^{n_{c}\times n}$, and $RP=I_{n_{c}}$). 

Throughout this paper, a prolongation operator $P_{\star}\in\mathbb{R}^{n\times n_{c}}$ is referred to as an \emph{ideal} interpolation if
\begin{displaymath}
P_{\star}\in\mathop{\arg\min}_{P}\Big\{\max_{\mathbf{e}\neq 0}\mu_{X}(PR,\mathbf{e})\Big\}.
\end{displaymath}
It was argued by Falgout and Vassilevski~\cite[Theorem 3.1]{Falgout2004} that $P_{\star}$ must satisfy
\begin{equation}\label{key_rela1}
P_{\star}^{T}AS=0,
\end{equation}
where $S\in\mathbb{R}^{n\times n_{s}}$ $(n_{s}=n-n_{c})$ is of full column rank and $RS=0$. On the basis of~\eqref{key_rela1}, they derived an explicit expression for $P_{\star}$, i.e.,
\begin{equation}\label{old_exp2}
P_{\star}=\big(I-S(S^{T}AS)^{-1}S^{T}A\big)R^{T},
\end{equation}
which provides foundation for relating and comparing their theory to existing methods such as AMGe~\cite{Brezina2001,Jones2001}, spectral AMGe~\cite{Chartier2003}, and smoothed aggregation AMG~\cite{Vanek1996,Vanek2001,Brezina2012}. 

Unfortunately, the ideal interpolation $P_{\star}$ may not satisfy~\eqref{key_rela1} and~\eqref{old_exp2}; see the counter-example in Example~\ref{count_exam}. In fact, \eqref{key_rela1} is only sufficient to ensure that $P_{\star}$ is an ideal interpolation in general. Motivated by this observation, we revisit the min-max property of the measure~\eqref{g_meas} and obtain new characterizations of the ideal interpolation. The main result of this paper is that the following set relations (see Theorem~\ref{main_theo}) hold:
\begin{equation}\label{set_rela}
\mathbb{P}_{0}\subseteq\mathbb{P}_{2}=\mathbb{P}_{\star}\subseteq\mathbb{P}_{1},
\end{equation}
where
\begin{align}
\mathbb{P}_{\star}&:=\Big\{P:\max_{\mathbf{e}\neq 0}\mu_{X}(PR,\mathbf{e})=\mu_{X}^{\star}\Big\},\\
\mathbb{P}_{0}&:=\Big\{P:P^{T}AS=0\Big\},\\
\mathbb{P}_{1}&:=\Big\{P: \Null\big(P^{T}AS\big)\cap\big\{\mathbf{v}\in\mathbb{R}^{n_{s}}\backslash\{0\}:S^{T}AS\mathbf{v}=\lambda_{\min}(A_{X})S^{T}XS\mathbf{v}\big\}\neq\varnothing\Big\},\label{eqv_P1}
\\
\mathbb{P}_{2}&:=\Big\{P:\Null\big(P^{T}AS\big)\cap\big\{\mathbf{v}\in\mathbb{R}^{n_{s}}\backslash\{0\}:S^{T}BS\mathbf{v}=\lambda_{\min}(B_{X})S^{T}XS\mathbf{v}\big\}\neq\varnothing\Big\}.\label{eqv_P2}
\end{align}
Here, $\mu_{X}^{\star}$, $A_{X}$, $B_{X}$, and $B$ will be specified later in~\eqref{opt_value}, \eqref{def_Ax}, \eqref{def_Bx}, and~\eqref{def_B}, respectively. The relation~\eqref{set_rela} suggests that one has more room than $\mathbb{P}_{0}$ to construct an ideal interpolation. Another interesting result is that the following expression for the ideal interpolation in $\mathbb{P}_{0}$ (see Theorem~\ref{main_theo1}) holds:
\begin{equation}\label{exp_P0}
P_{\star}=A^{-1}R^{T}(RA^{-1}R^{T})^{-1}.
\end{equation}
By comparing~\eqref{exp_P0} with~\eqref{old_exp2}, we see that the new expression~\eqref{exp_P0} does not involve the auxiliary operator $S$.

The rest of this paper is organized as follows. In Section~\ref{sec:Pre}, we first briefly review the two-grid (TG) method and existing results on the ideal interpolation, and then give an example to illustrate that the ideal interpolation $P_{\star}$ may not satisfy~\eqref{key_rela1} and~\eqref{old_exp2}. In Section~\ref{sec:Cha}, we establish new characterizations of the ideal interpolation. In Section~\ref{sec:Exp}, we present a new expression for the ideal interpolation in $\mathbb{P}_{0}$, which does not involve the operator $S$. Finally, some conclusions are given in Section~\ref{sec:Con}.

\section{Preliminaries}
\label{sec:Pre}
\setcounter{equation}{0}

We first introduce some basic notation. The identity matrix of order $n$ is denoted by $I_{n}$ (or $I$ when its size is clear in the context). The range and the null space of a matrix are denoted by $\Range(\cdot)$ and $\Null(\cdot)$, respectively. The largest and smallest eigenvalues of a matrix are denoted by $\lambda_{\max}(\cdot)$ and $\lambda_{\min}(\cdot)$, respectively. The Euclidean inner product ($L^{2}$-inner product) and its associated norm are denoted by $(\cdot,\cdot)$ and $\|\cdot\|:=(\cdot,\cdot)^{1/2}$, respectively. For an SPD matrix $A$, the $A$-inner product and the $A$-norm (also called the \emph{energy norm}) are defined by $(\cdot,\cdot)_{A}:=(A\cdot,\cdot)$ and $\|\cdot\|_{A}:=(\cdot,\cdot)_{A}^{1/2}$, respectively.

\subsection{Two-grid method} 

Consider solving the linear system 
\begin{equation}\label{linear}
A\mathbf{u}=\mathbf{f},
\end{equation}
where $A\in\mathbb{R}^{n\times n}$ is SPD, $\mathbf{u}\in\mathbb{R}^{n}$, and $\mathbf{f}\in\mathbb{R}^{n}$. Given a nonsingular matrix $M\in\mathbb{R}^{n\times n}$ and an initial guess $\mathbf{u}_{0}\in\mathbb{R}^{n}$, we perform the following iteration:
\begin{equation}\label{iter}
\mathbf{u}_{k+1}=\mathbf{u}_{k}+M^{-1}(\mathbf{f}-A\mathbf{u}_{k}), \quad k=0,1,\ldots,
\end{equation}
where $M$ is called a \emph{smoother} and $\mathbf{f}-A\mathbf{u}_{k}$ is the \emph{residual} at the $k$-th iteration. Let $\mathbf{e}_{k}=\mathbf{u}-\mathbf{u}_{k}$. We then have
\begin{displaymath}
\mathbf{e}_{k+1}=(I-M^{-1}A)\mathbf{e}_{k}.
\end{displaymath}
A sufficient and necessary condition for~\eqref{iter} to be $A$-convergent (i.e., $\|I-M^{-1}A\|_{A}<1$) is that $M+M^{T}-A$ is SPD, which can be easily seen from the identity
\begin{displaymath}
\big\|(I-M^{-1}A)\mathbf{e}\big\|_{A}^{2}=(A\mathbf{e},\mathbf{e})-\big((M+M^{T}-A)M^{-1}A\mathbf{e},M^{-1}A\mathbf{e}\big), \quad \forall\, \mathbf{e}\in\mathbb{R}^{n}.
\end{displaymath}

Let $P: \mathbb{R}^{n_{c}}\mapsto\mathbb{R}^{n}$ be a \emph{prolongation} (or \emph{interpolation}) operator, where $\mathbb{R}^{n_{c}}$ is a lower-dimensional (coarse) vector space of size $n_{c}$. The operator $A_{c}=P^{T}AP\in\mathbb{R}^{n_{c}\times n_{c}}$ is the so-called \emph{Galerkin coarse-grid operator}. For an initial guess $\mathbf{u}$, the standard (symmetrized) TG method (see, e.g.,~\cite[Algorithm 1]{Hu2016}) for solving~\eqref{linear} can be described as follows: 
\begin{align*}
\text{\emph{Step 1}. } & \text{Presmoothing}: \mathbf{u}\leftarrow \mathbf{u}+M^{-1}(\mathbf{f}-A\mathbf{u});&\\
\text{\emph{Step 2}. } & \text{Restriction}: \mathbf{r}_{c}\leftarrow P^{T}(\mathbf{f}-A\mathbf{u});&\\
\text{\emph{Step 3}. } & \text{Coarse-grid correction}: \mathbf{e}_{c}\leftarrow A_{c}^{-1}\mathbf{r}_{c};&\\
\text{\emph{Step 4}. } & \text{Prolongation}: \mathbf{u}\leftarrow \mathbf{u}+P\mathbf{e}_{c};&\\
\text{\emph{Step 5}. } & \text{Postsmoothing}: \mathbf{u}\leftarrow \mathbf{u}+M^{-T}(\mathbf{f}-A\mathbf{u}).&
\end{align*}
It is easy to see that the iteration matrix $E_\text{TG}$ of the above TG method is
\begin{equation}\label{iter_mat}
E_\text{TG}=(I-M^{-T}A)(I-PA_{c}^{-1}P^{T}A)(I-M^{-1}A).
\end{equation}
For more theories about the TG method, we refer to~\cite{Falgout2005,Notay2007,Notay2015} and the references therein. 
By applying the TG method recursively, one can obtain a multilevel method for solving~\eqref{linear}.

As is well-known, the aim of AMG methods is to balance the interplay between the smoother $M$ and the coarse-space $\Range(P)$. When a smoother $M$ is selected, the main task of an AMG algorithm is to construct a ``good'' prolongation $P$. Roughly speaking, $P$ should be constructed so that ``\emph{algebraically smooth error}'' can be effectively eliminated in correction steps and the coarse-grid equations (involving $A_{c}$) are amenable to solution~\cite{Brezina2001}. Here, ``algebraically smooth error'' refers to the error components that are not being effectively damped by the relaxation process~\eqref{iter}. 

\subsection{Quality measures and the ideal interpolation}

Let $R: \mathbb{R}^{n}\mapsto\mathbb{R}^{n_{c}}$ be an operator for which $RP=I_{n_{c}}$ and let $Q=PR\in\mathbb{R}^{n\times n}$. It is easy to see that $Q$ is a projection (i.e., $Q^{2}=Q$) onto $\Range(P)$. Let $S: \mathbb{R}^{n_{s}}\mapsto\mathbb{R}^{n}$ $(n_{s}=n-n_{c})$ be a full column rank operator satisfying $RS=0$. Clearly, $S$ and $R^{T}$ form an $L^{2}$-orthogonal decomposition of $\mathbb{R}^{n}$. That is, for any $\mathbf{e}\in\mathbb{R}^{n}$, it can be written as $\mathbf{e}=S\mathbf{e}_{s}+R^{T}\mathbf{e}_{c}$ for some $\mathbf{e}_{s}\in\mathbb{R}^{n_{s}}$ and $\mathbf{e}_{c}\in\mathbb{R}^{n_{c}}$. 

In the classical AMG setting, the set of coarse-grid variables is a subset of fine-grid variables. Typically, the operators $R$, $S$, and $P$ have the following forms:
\begin{displaymath}
R=\begin{pmatrix}
0 & I_{n_{c}}
\end{pmatrix}, \quad S=\begin{pmatrix}
I_{n_{s}}\\
0
\end{pmatrix}, \quad 
P=\begin{pmatrix}
W\\
I_{n_{c}}
\end{pmatrix},
\end{displaymath}
where $W\in\mathbb{R}^{n_{s}\times n_{c}}$ denotes the interpolation weights for fine-grid variables.

Since $Q=PR$ and $RP=I_{n_{c}}$, for any $\mathbf{e}\in\Range(P)$, we have $(I-Q)\mathbf{e}=0$. Thus, $I-Q$ can be used to measure the defect of $P$. Define
\begin{equation}\label{mu_M}
\mu_{\widetilde{M}}(Q,\mathbf{e}):=\frac{\big(\widetilde{M}(I-Q)\mathbf{e},(I-Q)\mathbf{e}\big)}{(A\mathbf{e},\mathbf{e})},
\end{equation}
where
\begin{displaymath}
\widetilde{M}:=M^{T}(M+M^{T}-A)^{-1}M.
\end{displaymath}
Let
\begin{equation}\label{def_K}
K=\sup_{\mathbf{e}\neq 0}\mu_{\widetilde{M}}(Q,\mathbf{e}),
\end{equation}
and let $E_\text{TG}$ be given by~\eqref{iter_mat}. Falgout and Vassilevski~\cite[Theorem 2.2]{Falgout2004} showed that $K\geq 1$ and
\begin{equation}\label{conver}
\|E_\text{TG}\|_{A}=\big\|(I-M^{-T}A)(I-PA_{c}^{-1}P^{T}A)\big\|_{A}^{2}\leq 1-\frac{1}{K}.
\end{equation}
This shows that, if the measure $\mu_{\widetilde{M}}$ is bounded above by a constant, then the TG method converges uniformly.

\begin{remark}\rm
A simpler measure
\begin{equation}\label{mu_s}
\mu_{M_\text{s}}(Q,\mathbf{e}):=\frac{\big(M_\text{s}(I-Q)\mathbf{e},(I-Q)\mathbf{e}\big)}{(A\mathbf{e},\mathbf{e})}
\end{equation}
was given in~\cite[Eq. (2.11)]{Falgout2004}, where 
\begin{displaymath}
M_\text{s}:=\frac{1}{2}(M+M^{T})
\end{displaymath}
is the symmetric part of $M$. Assume that $M+M^{T}-A$ is SPD. It was proved by Falgout and Vassilevski~\cite[Lemma 2.3]{Falgout2004} that
\begin{equation}\label{mu_MH}
\mu_{\widetilde{M}}(Q,\mathbf{e})\leq\frac{\Delta^{2}}{2-\omega}\mu_{M_\text{s}}(Q,\mathbf{e}),
\end{equation}
where $0<\omega:=\lambda_{\max}\big(M_\text{s}^{-1}A\big)<2$ and $\Delta\geq1$ measures the deviation of $M$ from its symmetric part $M_\text{s}$ in the sense that
\begin{displaymath}
(M\mathbf{v},\mathbf{w})\leq\Delta(M_\text{s}\mathbf{v},\mathbf{v})^{1/2}(M_\text{s}\mathbf{w},\mathbf{w})^{1/2}.
\end{displaymath}
The relation~\eqref{mu_MH} suggests that the uniform upper bound for $\mu_{\widetilde{M}}$ can be acquired by bounding $\mu_{M_\text{s}}$ uniformly.
\end{remark}

To analyze the min-max properties of $\mu_{\widetilde{M}}$ and $\mu_{M_\text{s}}$, the general measure~\eqref{g_meas} was considered in~\cite{Falgout2004}. Assume that the measure~\eqref{g_meas} is bounded uniformly for all $\mathbf{e}\in\mathbb{R}^{n}\backslash\{0\}$ (without loss of generality, we assume that $\|\mathbf{e}\|=1$). If $\mathbf{e}$ is an eigenvector of $A$ corresponding to a small eigenvalue, then the denominator is small and thus the numerator must be small as well. Hence, $Q$ can accurately interpolate the eigenvectors corresponding to the small eigenvalues of $A$. On the other hand, if $\mathbf{e}$ is an eigenvector of $A$ corresponding to a large eigenvalue, then the denominator is large, which implies that the numerator may be large. Hence, $Q$ may not interpolate the eigenvectors corresponding to the large eigenvalues of $A$ accurately~\cite{Brezina2001}.

Actually, there are many choices to select the SPD matrix $X$ in~\eqref{g_meas}. For example, $X$ can be selected so that it is spectrally equivalent to $\widetilde{M}$, namely,
\begin{equation}\label{spe_eqv}
c_{1}\mathbf{e}^{T}X\mathbf{e}\leq\mathbf{e}^{T}\widetilde{M}\mathbf{e}\leq c_{2}\mathbf{e}^{T}X\mathbf{e}, \quad \forall\,\mathbf{e}\in\mathbb{R}^{n},
\end{equation}
where $c_{1}$ and $c_{2}$ are two generic positive constants. We next give an interpretation for this choice. Let $\eta=\sup_{\mathbf{e}\neq 0}\mu_{X}(Q,\mathbf{e})$ and $\mathbf{e}_{c}=R\mathbf{e}$. We then have
\begin{equation}\label{WAP}
\|\mathbf{e}-P\mathbf{e}_{c}\|_{X}^{2}\leq\eta\|\mathbf{e}\|_{A}^{2}, \quad \forall\,\mathbf{e}\in\mathbb{R}^{n}.
\end{equation}
If the relation~\eqref{spe_eqv} holds, we have
\begin{displaymath}
\|\mathbf{e}-P\mathbf{e}_{c}\|_{\widetilde{M}}^{2}\leq c_{2}\|\mathbf{e}-P\mathbf{e}_{c}\|_{X}^{2}\leq c_{2}\eta\|\mathbf{e}\|_{A}^{2}, \quad \forall\,\mathbf{e}\in\mathbb{R}^{n},
\end{displaymath}
which yields $K\leq c_{2}\eta$ ($K$ is given by~\eqref{def_K}). Using~\eqref{conver}, we immediately obtain
\begin{displaymath}
\|E_\text{TG}\|_{A}\leq 1-\frac{1}{c_{2}\eta},
\end{displaymath}
which implies the uniform convergence of the TG method. The \emph{weak approximation property} of the coarse-space can be stated as: for any $\mathbf{e}\in\mathbb{R}^{n}$, there is a coarse vector $\mathbf{e}_{c}\in\mathbb{R}^{n_{c}}$ such that~\eqref{WAP} holds, provided that $X$ is spectrally equivalent to $\widetilde{M}$. It is well-known that the weak approximation property is a sufficient and necessary condition for the uniform convergence of the TG method (see, e.g.,~\cite[Chapter 5, Section 3]{Vassilevski2010}).

The following lemma presents the min-max property of the measure~\eqref{g_meas}, which gives a necessary condition and an explicit expression of the ideal interpolation~\cite[Theorem 3.1 and Corollary 3.2]{Falgout2004}.

\begin{lemma}	
Let $A\in\mathbb{R}^{n\times n}$, $X\in\mathbb{R}^{n\times n}$,
$P\in\mathbb{R}^{n\times n_{c}}$, $R\in\mathbb{R}^{n_{c}\times n}$, and $S\in\mathbb{R}^{n\times n_{s}}$ ($n_{s}=n-n_{c}$). Assume that both $A$ and $X$ are SPD, $RP=I_{n_{c}}$, $RS=0$, and $S$ is of full column rank. Define
\begin{equation}\label{def_opt_value}
\mu_{X}^{\star}:=\min_{P}\max_{\mathbf{e}\neq 0}\mu_{X}(PR,\mathbf{e}),
\end{equation}
where $\mu_{X}(PR,\mathbf{e})$ is defined by~\eqref{g_meas}. Then
\begin{equation}\label{opt_value}
\mu_{X}^{\star}=\frac{1}{\lambda_{\min}\big((S^{T}XS)^{-1}S^{T}AS\big)},
\end{equation}
and the minimizer $P_{\star}$ must satisfy
\begin{equation}\label{key_rela2}
P_{\star}^{T}AS=0.
\end{equation}
Moreover, $P_{\star}$ has the explicit expression
\begin{equation}\label{old_exp}
P_{\star}=\big(I-S(S^{T}AS)^{-1}S^{T}A\big)R^{T}.
\end{equation}
\end{lemma}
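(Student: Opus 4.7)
The plan is to convert the inner maximization in $\mathbf{e}$ into a generalized eigenvalue problem on the smaller space $\mathbb{R}^{n_{s}}$, and then minimize the resulting quantity over $P$ by exploiting monotonicity in the Loewner order.

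First, I would observe that the assumptions $RP=I_{n_{c}}$, $RS=0$, and full column rank of $S$ force $\Range(P)\cap\Range(S)=\{0\}$ (apply $R$ to any common element), and a dimension count gives $\dim\Range(P)+\dim\Range(S)=n$, so every $\mathbf{e}\in\mathbb{R}^{n}$ admits a unique decomposition $\mathbf{e}=P\mathbf{u}_{c}+S\mathbf{v}$. Since $Q=PR$ is the oblique projection onto $\Range(P)$ along $\Range(S)=\Null(R)$, we have $(I-Q)\mathbf{e}=S\mathbf{v}$, so the numerator of $\mu_{X}(PR,\mathbf{e})$ equals $\mathbf{v}^{T}S^{T}XS\mathbf{v}$ and depends only on $\mathbf{v}$. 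For the denominator, fix $\mathbf{v}\neq 0$ and minimize the quadratic $(A\mathbf{e},\mathbf{e})=\mathbf{v}^{T}S^{T}AS\mathbf{v}+2\mathbf{u}_{c}^{T}P^{T}AS\mathbf{v}+\mathbf{u}_{c}^{T}P^{T}AP\mathbf{u}_{c}$ over $\mathbf{u}_{c}$; completing the square yields the optimum $\mathbf{u}_{c}=-(P^{T}AP)^{-1}P^{T}AS\mathbf{v}$ and minimum value $\mathbf{v}^{T}G(P)\mathbf{v}$, where $G(P):=S^{T}AS-S^{T}AP(P^{T}AP)^{-1}P^{T}AS$ is the Schur complement of $A$ with respect to $\Range(P)$ (hence SPD, thanks to $\Range(P)\cap\Range(S)=\{0\}$).

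Combining the two computations gives $\max_{\mathbf{e}\neq 0}\mu_{X}(PR,\mathbf{e})=1/\lambda_{\min}\big((S^{T}XS)^{-1}G(P)\big)$, so the outer minimization reduces to maximizing $\lambda_{\min}\big((S^{T}XS)^{-1}G(P)\big)$ over all admissible $P$. Because $S^{T}AP(P^{T}AP)^{-1}P^{T}AS$ is a Gram matrix and hence PSD, $G(P)\preceq S^{T}AS$ in the Loewner order, with the defect vanishing if and only if $P^{T}AS=0$. Transforming by the congruence $(S^{T}XS)^{-1/2}$ and invoking Loewner monotonicity of $\lambda_{\min}$, the supremum is attained when $P^{T}AS=0$, which yields simultaneously the value \eqref{opt_value} of $\mu_{X}^{\star}$ and the necessary condition \eqref{key_rela2}. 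Finally, I would verify \eqref{old_exp} by direct substitution: $P_{\star}^{T}AS=R(A-AS(S^{T}AS)^{-1}S^{T}A)S=0$, and $RP_{\star}=RR^{T}-RS(S^{T}AS)^{-1}S^{T}AR^{T}=RR^{T}=I_{n_{c}}$ in the canonical AMG setting where $R$ has orthonormal rows.

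The main obstacle is the necessity half of the Loewner step: the implication ``$\lambda_{\min}\big((S^{T}XS)^{-1}G(P)\big)$ attains its upper bound $\Longrightarrow G(P)=S^{T}AS$'' requires \emph{strict} monotonicity of $\lambda_{\min}$ under strict Loewner order, which can fail whenever the loss $S^{T}AP(P^{T}AP)^{-1}P^{T}AS$ is supported outside the eigenspace realizing $\lambda_{\min}\big((S^{T}XS)^{-1}S^{T}AS\big)$. The sufficiency direction ($P^{T}AS=0\Rightarrow$ optimality) and the formula for $\mu_{X}^{\star}$ are robust, but pinning down the asserted necessity in \eqref{key_rela2} as worded is the delicate point, and I expect this is precisely where a careful proof must work hardest.
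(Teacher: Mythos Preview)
Your approach matches the paper's: the decomposition $\mathbf{e}=S\mathbf{e}_{s}+P\mathbf{e}_{c}$, the inner minimization over $\mathbf{e}_{c}$ yielding the Schur complement $G(P)=S^{T}AS-S^{T}AP(P^{T}AP)^{-1}P^{T}AS$, and the reduction of $\max_{\mathbf{e}\neq 0}\mu_{X}(PR,\mathbf{e})$ to $1/\lambda_{\min}\big((S^{T}XS)^{-1}G(P)\big)$ are exactly what the paper carries out (in the proof of its Lemma~3.3). The Loewner comparison $G(P)\preceq S^{T}AS$ and the attainment at $P^{T}AS=0$ give the value~\eqref{opt_value} and the sufficiency direction, again as in the paper.

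The crucial point is that the paper does \emph{not} prove this lemma; it quotes it from Falgout--Vassilevski and then refutes the necessity claim~\eqref{key_rela2}. Your instinct in the last paragraph is exactly right and is in fact the paper's central observation: Loewner monotonicity of $\lambda_{\min}$ is not strict, so one cannot conclude $P^{T}AS=0$ from $\lambda_{\min}\big((S^{T}XS)^{-1}G(P)\big)=\lambda_{\min}\big((S^{T}XS)^{-1}S^{T}AS\big)$. The paper supplies an explicit counterexample (Example~\ref{count_exam}) where $P=R^{T}$ is ideal yet $P^{T}AS\neq 0$, and then replaces the false necessity~\eqref{key_rela2} by the correct equivalent condition $\mathbb{P}_{\star}=\mathbb{P}_{2}$ (Theorem~\ref{main_theo}). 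So there is no way to ``work harder'' and close the gap you flagged---the assertion is simply false as stated.

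One minor point you also implicitly caught: your verification of $RP_{\star}=I_{n_{c}}$ for the formula~\eqref{old_exp} requires $RR^{T}=I_{n_{c}}$, which is not among the lemma's hypotheses. The paper notes this too, giving the corrected expression $P_{\star}=\big(I-S(S^{T}AS)^{-1}S^{T}A\big)R^{T}(RR^{T})^{-1}$ in~\eqref{old_exp1} and observing that~\eqref{old_exp} is the special case $RR^{T}=I_{n_{c}}$.
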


\begin{remark}\rm
If we set $X$ as $\widetilde{M}$ and $M_\text{s}$, then~\eqref{def_opt_value} reduce to
\begin{displaymath}
\mu_{\widetilde{M}}^{\star}:=\min_{P}\max_{\mathbf{e}\neq 0}\mu_{\widetilde{M}}(PR,\mathbf{e})
\quad \text{and} \quad
\mu_{M_\text{s}}^{\star}:=\min_{P}\max_{\mathbf{e}\neq 0}\mu_{M_\text{s}}(PR,\mathbf{e}),
\end{displaymath}
respectively. The quantities $\mu_{\widetilde{M}}^{\star}$ and $\mu_{M_\text{s}}^{\star}$ measure the ability of the coarse-grid to represent ``algebraically smooth error''. Empirical evidence so far indicates that $\mu_{\widetilde{M}}^{\star}$ and $\mu_{M_\text{s}}^{\star}$ are useful measures in practice~\cite{Falgout2004}.
\end{remark}

\subsection{Geometric illustration of the measure $\mu_{\widetilde{M}}$}

Let $E_\text{TG}$ be given by~\eqref{iter_mat} and define
\begin{equation}\label{pi_M}
\varPi_{\widetilde{M}}:=P(P^{T}\widetilde{M}P)^{-1}P^{T}\widetilde{M}.
\end{equation}
Falgout et al.~\cite[Theorem 4.3]{Falgout2005} proved that
\begin{equation}\label{E_TG}
\|E_\text{TG}\|_{A}=1-\frac{1}{K_\text{TG}},
\end{equation}
where
\begin{equation}\label{K_TG}
K_\text{TG}=\sup_{\mathbf{e}\neq 0} \, \frac{ \mathbf{e}^{T} (I-\varPi_{\widetilde{M}})^{T} \widetilde{M}(I-\varPi_{\widetilde{M}})\mathbf{e}}{\mathbf{e}^{T}A\mathbf{e}}.
\end{equation}
As discussed in~\cite[Remark 4.1]{Falgout2005}, for any $\mathbf{e}\in\mathbb{R}^{n}$, it holds that
\begin{equation}\label{proj_rela}
\big\|(I-\varPi_{\widetilde{M}})\mathbf{e}\big\|_{\widetilde{M}}^{2}\leq\big\|(I-Q)\mathbf{e}\big\|_{\widetilde{M}}^{2},
\end{equation}
where $Q=PR$ and $R: \mathbb{R}^{n}\mapsto\mathbb{R}^{n_{c}}$ is an operator satisfying $RP=I_{n_{c}}$. By combining~\eqref{K_TG} and~\eqref{proj_rela}, we obtain that
\begin{equation}\label{K_TG,K}
K_\text{TG}\leq\sup_{\mathbf{e}\neq 0}\frac{\|(I-Q)\mathbf{e}\|_{\widetilde{M}}^{2}}{\|\mathbf{e}\|_{A}^{2}}=\sup_{\mathbf{e}\neq 0}\mu_{\widetilde{M}}(Q,\mathbf{e})=K.
\end{equation}
Hence, the estimate~\eqref{conver} follows immediately from~\eqref{E_TG} and~\eqref{K_TG,K}.

If an operator $P_\text{opt}\in\mathbb{R}^{n\times n_{c}}$ directly minimizes the TG convergence rate $\|E_\text{TG}\|_{A}$, then $P_\text{opt}$ is called the \emph{optimal} interpolation operator. In view of~\eqref{E_TG}, we can obtain a lower bound for $K_\text{TG}$, that is,
\begin{displaymath}
K_\text{TG}\geq\frac{1}{1-\|E_\text{TG}(P_\text{opt})\|_{A}}.
\end{displaymath}
Clearly, $P_\text{opt}$ is also the interpolation operator that minimizes $K_\text{TG}$. The optimal interpolation $P_\text{opt}$ can provide guidance in the design of practical AMG methods. However, $P_\text{opt}$ itself is expensive to compute due to its columns consist of eigenvectors corresponding to small eigenvalues. Explicit form of $P_\text{opt}$ (or the optimal coarse-space) and the precise value $\|E_\text{TG}(P_\text{opt})\|_{A}$ can be found, e.g., in~\cite{Xu2017,Brannick2017}. Recently, some interesting relationships between the optimal and ideal interpolations have been discussed by Brannick et al.~\cite{Brannick2017}.

It is easy to see that the relation~\eqref{proj_rela} is equivalent to
\begin{equation}\label{trans_rela}
\Big\|\big(I-\widetilde{M}^{1/2}\varPi_{\widetilde{M}}\widetilde{M}^{-1/2}\big)\widetilde{M}^{1/2}\mathbf{e}\Big\|\leq\Big\|\big(I-\widetilde{M}^{1/2}PR\widetilde{M}^{-1/2}\big)\widetilde{M}^{1/2}\mathbf{e}\Big\|.
\end{equation} 
According to the definition~\eqref{pi_M}, we have that $I-\widetilde{M}^{1/2}\varPi_{\widetilde{M}}\widetilde{M}^{-1/2}$ is an $L^{2}$-orthogonal projection \emph{along} (or \emph{parallel} to) $\Range(\widetilde{M}^{1/2}P)$ onto $\Null(P^{T}\widetilde{M}^{1/2})$. Similarly, $I-\widetilde{M}^{1/2}PR\widetilde{M}^{-1/2}$ is an $L^{2}$-oblique projection along $\Range(\widetilde{M}^{1/2}P)$ onto $\Null(R\widetilde{M}^{-1/2})$. In two-dimensional case, a geometric illustration of~\eqref{trans_rela} is shown in Figure~\ref{fig:2D_proj}.

\begin{figure}[htbp]
\centering
\includegraphics[width=0.58\textwidth,height=0.32\textheight]{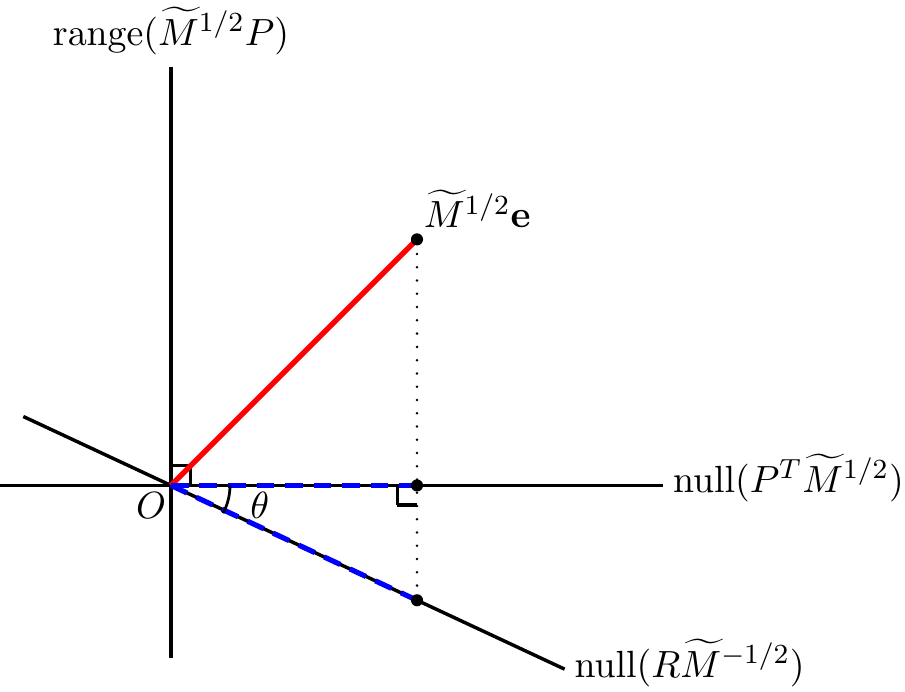} 
\caption{\small Two-dimensional illustration of \eqref{trans_rela}.}
\label{fig:2D_proj}
\end{figure}

From Figure~\ref{fig:2D_proj}, we observe that
\begin{displaymath}
\Big\|\big(I-\widetilde{M}^{1/2}\varPi_{\widetilde{M}}\widetilde{M}^{-1/2}\big)\widetilde{M}^{1/2}\mathbf{e}\Big\|=\Big\|\big(I-\widetilde{M}^{1/2}PR\widetilde{M}^{-1/2}\big)\widetilde{M}^{1/2}\mathbf{e}\Big\|\cdot\cos\theta,
\end{displaymath}
where $\theta$ $(0\leq\theta<\pi/2)$ denotes the angle between the spaces $\Null(P^{T}\widetilde{M}^{1/2})$ and $\Null(R\widetilde{M}^{-1/2})$. Hence,
\begin{displaymath}
K=\frac{1}{\cos^{2}\theta}K_\text{TG}.
\end{displaymath}
Obviously, $K$ approaches $K_\text{TG}$ as $\theta$ tends to zero. In other words, if the angle $\theta$ is small, then $\mu_{\widetilde{M}}$ can measure the quality of the coarse-grid effectively.

\begin{remark}\rm
Define
\begin{displaymath}
P_{\sharp}:=\widetilde{M}^{-1}R^{T}\big(R\widetilde{M}^{-1}R^{T}\big)^{-1}.
\end{displaymath}
Based on the relation $RP=I_{n_{c}}$, we can derive that $\theta=0$ (or $\Null(P^{T}\widetilde{M}^{1/2})=\Null(R\widetilde{M}^{-1/2})$) if and only if $P=P_{\sharp}$. In this case, $K=K_\text{TG}$ and hence $\|E_\text{TG}\|_{A}=1-1/K$.
\end{remark}

\subsection{An illustrative example}

We are now in a position to illustrate that the ideal interpolation $P_{\star}$ may not satisfy~\eqref{key_rela2} and~\eqref{old_exp}.

\begin{example}\label{count_exam}\rm
Let
\begin{displaymath}
A=\begin{pmatrix}
2 & -1 & 1\\
-1 & 2 & -1\\
1 & -1 & 2
\end{pmatrix}, \quad
X=\diag(A)=\begin{pmatrix}
2 & 0 & 0\\
0 & 2 & 0\\
0 & 0 & 2
\end{pmatrix}, \quad
S=\begin{pmatrix}
1 & 0\\
0 & 1\\
0 & 0
\end{pmatrix}, 
\quad 
\text{and}
\quad 
P=R^{T}=\begin{pmatrix}
0\\
0\\
1
\end{pmatrix}.
\end{displaymath}
Straightforward calculations yield
\begin{displaymath}
S^{T}AS=\begin{pmatrix}
2 & -1\\
-1 & 2
\end{pmatrix} \quad \text{and} \quad S^{T}XS=\begin{pmatrix}
2 & 0\\
0 & 2
\end{pmatrix}.
\end{displaymath}
We remark that, although the ideal interpolation may not satisfy~\eqref{key_rela2} and~\eqref{old_exp}, the value $\mu_{X}^{\star}$ given by~\eqref{opt_value} is correct. By~\eqref{opt_value} and~\eqref{old_exp}, we have
\begin{displaymath}
\mu_{X}^{\star}=2 \quad \text{and} \quad P_{\star}=\begin{pmatrix}
-\frac{1}{3}\\
\frac{1}{3}\\
1
\end{pmatrix}.
\end{displaymath}
On the other hand, it is easy to see that
\begin{displaymath}
P^{T}AS=\begin{pmatrix}
1 & -1
\end{pmatrix}.
\end{displaymath}
Direct computation yields
\begin{displaymath}
\max_{\mathbf{e}\neq 0}\mu_{X}(PR,\mathbf{e})=2=\mu_{X}^{\star}.
\end{displaymath}
Thus, $P$ is an ideal interpolation (however, $P^{T} A S \neq 0$ and $P\neq P_{\star}$). Indeed, in this example, both $P$ and $P_{\star}$ are ideal interpolations.
\end{example}

To test the numerical performances of $P$ and $P_{\star}$, we perform a simple experiment. Set 
\begin{displaymath}
\mathbf{f}=\begin{pmatrix}
1\\
1\\
1
\end{pmatrix}, \quad \mathbf{u}_{0}=\begin{pmatrix}
0\\
0\\
0
\end{pmatrix}, \quad \text{and} \quad 
M=\frac{1}{0.8}\diag(A)=\begin{pmatrix}
2.5 & 0 & 0\\
0 & 2.5 & 0\\
0 & 0 & 2.5
\end{pmatrix}.
\end{displaymath}
Evidently, $M$ is a weighted Jacobi type smoother. We take $X=\frac{1}{2}(M+M^{T})$ as in~\eqref{mu_s}, which is a scalar matrix and hence $P$ in Example~\ref{count_exam} is still an ideal interpolation. We solve the linear system $A\mathbf{u}=\mathbf{f}$ (with the initial guess $\mathbf{u}_{0}$) by using the above TG method. Applying $P$ and $P_{\star}$ as the prolongation operators, respectively, we find that the required numbers of iterations are $15$ for both choices in order to make the residuals decrease by $6$ magnitudes.

In conclusion, Example~\ref{count_exam} demonstrates that the minimizer of~\eqref{def_opt_value} may not satisfy~\eqref{key_rela2}, and the ideal interpolation $P_{\star}$ given by~\eqref{old_exp} is not the unique minimizer of~\eqref{def_opt_value} in general. Motivated by this observation, we revisit the min-max property of the measure~\eqref{g_meas}. Some new characterizations of the ideal interpolation will be shown in the next section.

\section{Characterizations of the ideal interpolation}
\label{sec:Cha}
\setcounter{equation}{0}

In this section, we establish some new characterizations of the ideal interpolation, which can provide guidance for designing new AMG algorithms.
 
Some conditions are required for our analysis, which are summarized as follows:
\begin{displaymath}
(\mathbf{C}):
\begin{cases} A\in\mathbb{R}^{n\times n},\ X\in\mathbb{R}^{n\times n},\
P\in\mathbb{R}^{n\times n_{c}},\ R\in\mathbb{R}^{n_{c}\times n},\ 
S\in\mathbb{R}^{n\times n_{s}}\ (n_{s}=n-n_{c}), \ Q=PR,\\
RP=I_{n_{c}},\ RS=0,\ \emph{both $A$ and $X$ are SPD matrices},\ \emph{and $S$ is of full column rank}.
\end{cases}
\end{displaymath}
From the condition $(\mathbf{C})$, we can easily see that $S$ and $R^{T}$ form an $L^{2}$-orthogonal decomposition of $\mathbb{R}^{n}$. In addition, both $(S \ P)\in\mathbb{R}^{n\times n}$ and $(S \ R^{T})\in\mathbb{R}^{n\times n}$ are nonsingular. The following lemma gives the explicit expressions for $(S \ P)^{-1}$ and $(S \ R^{T})^{-1}$.

\begin{lemma}
Under the condition $(\mathbf{C})$, the matrices $(S \ P)^{-1}$ and $(S \ R^{T})^{-1}$ have the following expressions:
\begin{align}
\begin{pmatrix}
S & P
\end{pmatrix}^{-1}&=\begin{pmatrix}
(S^{T}AS)^{-1}S^{T}A(I-Q)\\
R
\end{pmatrix},\label{inv_SP}\\
\begin{pmatrix}
S & R^{T}
\end{pmatrix}^{-1}&=\begin{pmatrix}
(S^{T}AS)^{-1}S^{T}A\big(I-R^{T}(RR^{T})^{-1}R\big)\\
(RR^{T})^{-1}R
\end{pmatrix}.\label{inv_SR}
\end{align}
\end{lemma}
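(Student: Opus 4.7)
The plan is to derive both formulas constructively by expanding an arbitrary vector in the column basis of the relevant block matrix and solving for the coefficients. Under condition $(\mathbf{C})$, both $S^{T}AS$ and $RR^{T}$ are invertible: the former because $S$ has full column rank and $A$ is SPD, the latter because $RP=I_{n_{c}}$ forces $R$ to have full row rank. Thus the expressions on the right-hand sides of~\eqref{inv_SP} and~\eqref{inv_SR} are well-defined, and as a by-product the argument will show that $(S\ P)$ and $(S\ R^{T})$ are nonsingular.

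For~\eqref{inv_SP}, I would take an arbitrary $\mathbf{x}\in\mathbb{R}^{n}$ and write $\mathbf{x}=S\mathbf{y}_{s}+P\mathbf{y}_{c}$. Applying $R$ on the left and invoking $RS=0$ and $RP=I_{n_{c}}$ immediately yields $\mathbf{y}_{c}=R\mathbf{x}$. Substituting this back gives $S\mathbf{y}_{s}=(I-PR)\mathbf{x}=(I-Q)\mathbf{x}$; applying $S^{T}A$ and inverting $S^{T}AS$ then yields $\mathbf{y}_{s}=(S^{T}AS)^{-1}S^{T}A(I-Q)\mathbf{x}$. Stacking the two coefficient formulas produces~\eqref{inv_SP}.

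For~\eqref{inv_SR}, the argument is entirely parallel: write $\mathbf{x}=S\mathbf{y}_{s}+R^{T}\mathbf{y}_{c}$, apply $R$ and use $RS=0$ to get $\mathbf{y}_{c}=(RR^{T})^{-1}R\mathbf{x}$, isolate $S\mathbf{y}_{s}=\bigl(I-R^{T}(RR^{T})^{-1}R\bigr)\mathbf{x}$, and recover $\mathbf{y}_{s}$ by applying $S^{T}A$ and inverting $S^{T}AS$.

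I do not expect a significant obstacle: the argument is essentially a routine block-matrix inversion, and the only points that require a word of justification are the two invertibility claims above. As a verification alternative, one could simply multiply the claimed inverses against $(S\ P)$ and $(S\ R^{T})$ and check the four block entries, which reduces to the identities $(I-Q)S=S$, $(I-Q)P=0$, $RS=0$, and $RP=I_{n_{c}}$ for the first formula, and to $RS=0$ together with the projection identity $\bigl(I-R^{T}(RR^{T})^{-1}R\bigr)R^{T}=0$ for the second.
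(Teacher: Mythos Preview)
Your proposal is correct. The paper's own proof is precisely your ``verification alternative'': it records the identities $(I-Q)S=S$ and $(I-Q)P=0$ (from $RS=0$ and $RP=I_{n_{c}}$) and then multiplies the claimed inverses against $(S\ P)$ and $(S\ R^{T})$ to obtain the identity. Your primary route---expanding an arbitrary $\mathbf{x}$ in the columns and solving for the coefficients---is a constructive derivation rather than a verification; it has the modest advantage that one does not need to have the formula in hand beforehand, whereas the paper's check is shorter once the formula is written down. One small point to make explicit in your write-up: the step ``write $\mathbf{x}=S\mathbf{y}_{s}+P\mathbf{y}_{c}$'' presupposes that the columns of $(S\ P)$ span $\mathbb{R}^{n}$, which follows from the linear-independence argument implicit in your coefficient recovery (or from the nonsingularity the paper states just before the lemma).
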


\begin{proof}
Due to $RS=0$ and $RP=I_{n_{c}}$, it follows that 
\begin{displaymath}
(I-Q)S=S \quad \text{and} \quad (I-Q)P=0.
\end{displaymath}
Direct computations yield
\begin{align*}
\begin{pmatrix}
(S^{T}AS)^{-1}S^{T}A(I-Q)\\
R
\end{pmatrix}\begin{pmatrix}
S & P
\end{pmatrix}&=I,\\
\begin{pmatrix}
(S^{T}AS)^{-1}S^{T}A(I-R^{T}(RR^{T})^{-1}R)\\
(RR^{T})^{-1}R
\end{pmatrix}\begin{pmatrix}
S & R^{T}
\end{pmatrix}&=I.
\end{align*}
Hence, the expressions~\eqref{inv_SP} and~\eqref{inv_SR} are verified.
\end{proof}

In view of~\eqref{inv_SP} and~\eqref{inv_SR}, we can derive the following lemma, which presents two equivalent forms of $I-S(S^{T}AS)^{-1}S^{T}A$ and a general expression for $P$.

\begin{lemma} 
Under the condition $(\mathbf{C})$, we have the following results:

{\rm (i)} $I-S(S^{T}AS)^{-1}S^{T}A$ has the following equivalent forms:
\begin{align}
I-S(S^{T}AS)^{-1}S^{T}A&=\big(I-S(S^{T}AS)^{-1}S^{T}A\big)Q,\label{equiv1}\\
&=\big(I-S(S^{T}AS)^{-1}S^{T}A\big)R^{T}(RR^{T})^{-1}R;\label{equiv2}
\end{align}

{\rm (ii)} $P$ can be expressed as
\begin{equation}\label{g_exp}
P=R^{T}(RR^{T})^{-1}+S(S^{T}AS)^{-1}S^{T}AY
\end{equation}
for some $Y\in\mathbb{R}^{n\times n_{c}}$.
\end{lemma}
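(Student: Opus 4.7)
The plan is to extract both claims directly from the inverse formulas \eqref{inv_SP} and \eqref{inv_SR} by elementary algebra. The single ingredient that drives everything is the cancellation identity
\[
\bigl(I - S(S^{T}AS)^{-1}S^{T}A\bigr)S = 0,
\]
i.e.\ the oblique projector $S(S^{T}AS)^{-1}S^{T}A$ acts as the identity on $\Range(S)$; this is immediate from $S^{T}AS \cdot (S^{T}AS)^{-1} = I$.

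For part (i), I would first read off from $(S \ P)(S \ P)^{-1} = I$ combined with \eqref{inv_SP} the block decomposition
\[
I = S(S^{T}AS)^{-1}S^{T}A(I-Q) + PR,
\]
which rearranges to $I - Q = S(S^{T}AS)^{-1}S^{T}A(I-Q)$. Multiplying both sides on the left by $I - S(S^{T}AS)^{-1}S^{T}A$ and invoking the cancellation identity annihilates the right-hand side, giving $(I - S(S^{T}AS)^{-1}S^{T}A)(I-Q) = 0$, which is exactly \eqref{equiv1}. Applying the same argument verbatim to $(S \ R^{T})(S \ R^{T})^{-1} = I$ using \eqref{inv_SR}, now with $Q$ replaced by the $L^{2}$-orthogonal projector $R^{T}(RR^{T})^{-1}R$ onto $\Range(R^{T})$, yields \eqref{equiv2}.

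For part (ii), I would multiply \eqref{equiv2} on the right by $P$. Since $RP = I_{n_{c}}$,
\[
\bigl(I - S(S^{T}AS)^{-1}S^{T}A\bigr)P = \bigl(I - S(S^{T}AS)^{-1}S^{T}A\bigr)R^{T}(RR^{T})^{-1},
\]
and rearranging gives
\[
P = R^{T}(RR^{T})^{-1} + S(S^{T}AS)^{-1}S^{T}A\bigl(P - R^{T}(RR^{T})^{-1}\bigr),
\]
i.e.\ the representation \eqref{g_exp} with the explicit choice $Y = P - R^{T}(RR^{T})^{-1}$.

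There is no real obstacle; the argument is purely mechanical once the cancellation identity and the inverse formulas are in hand. The only mild subtlety is recognizing that \eqref{equiv2} (rather than \eqref{equiv1}) is the form suited to part (ii): replacing $Q = PR$ by the symmetric projection $R^{T}(RR^{T})^{-1}R$ eliminates $P$ from the right factor and lets the identity $RP = I_{n_{c}}$ produce $Y$ in closed form.
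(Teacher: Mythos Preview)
Your proof is correct and follows essentially the same path as the paper's, driven entirely by the inverse formulas \eqref{inv_SP} and \eqref{inv_SR}. The only cosmetic differences: the paper obtains \eqref{equiv1} by directly rearranging $I-Q=S(S^{T}AS)^{-1}S^{T}A(I-Q)$ (so your left-multiplication and cancellation identity are not needed), and in (ii) it argues existence of $Y$ via $\Null\bigl(I-S(S^{T}AS)^{-1}S^{T}A\bigr)=\Range\bigl(S(S^{T}AS)^{-1}S^{T}A\bigr)$ rather than exhibiting $Y=P-R^{T}(RR^{T})^{-1}$ explicitly as you do.
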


\begin{proof}
(i) By~\eqref{inv_SP}, we have
\begin{displaymath}
\begin{pmatrix}
S & P
\end{pmatrix}\begin{pmatrix}
(S^{T}AS)^{-1}S^{T}A(I-Q)\\
R
\end{pmatrix}=I,
\end{displaymath}
which implies~\eqref{equiv1}. Similarly, we can verify~\eqref{equiv2} based on the equality~\eqref{inv_SR}. 

(ii) Combining~\eqref{equiv1} and~\eqref{equiv2}, we get
\begin{displaymath}
\big(I-S(S^{T}AS)^{-1}S^{T}A\big)PR=\big(I-S(S^{T}AS)^{-1}S^{T}A\big)R^{T}(RR^{T})^{-1}R.
\end{displaymath}
Using $RP=I_{n_{c}}$, we obtain
\begin{displaymath}
\big(I-S(S^{T}AS)^{-1}S^{T}A\big)P=\big(I-S(S^{T}AS)^{-1}S^{T}A\big)R^{T}(RR^{T})^{-1},
\end{displaymath}
which yields
\begin{displaymath}
\big(I-S(S^{T}AS)^{-1}S^{T}A\big)\big(P-R^{T}(RR^{T})^{-1}\big)=0.
\end{displaymath}
Due to
\begin{displaymath}
\Null\big(I-S(S^{T}AS)^{-1}S^{T}A\big)=\Range\big(S(S^{T}AS)^{-1}S^{T}A\big),
\end{displaymath}
it follows that
\begin{displaymath}
P=R^{T}(RR^{T})^{-1}+S(S^{T}AS)^{-1}S^{T}AY
\end{displaymath}
for some $Y\in\mathbb{R}^{n\times n_{c}}$.
\end{proof}

Let $\mu_{X}(PR,\mathbf{e})$ and $\mu_{X}^{\star}$ be defined by~\eqref{g_meas} and~\eqref{def_opt_value}, respectively. We define the set of all ideal interpolations as follows:
\begin{equation}\label{ideal_set1}
\mathbb{P}_{\star}:=\Big\{P:\max_{\mathbf{e}\neq 0}\mu_{X}(PR,\mathbf{e})=\mu_{X}^{\star}\Big\}.
\end{equation}
It is not easy to acquire the properties of the ideal interpolation from the formal definition~\eqref{ideal_set1}. Alternatively, the following lemma presents an equivalent characterization of the set $\mathbb{P}_{\star}$, which gives a clearer interpretation of the ideal interpolation.

\begin{lemma}
Under the condition $(\mathbf{C})$, the set $\mathbb{P}_{\star}$ defined by~\eqref{ideal_set1} can be expressed as
\begin{equation}\label{ideal_set2}
\mathbb{P}_{\star}=\big\{P:\lambda_{\min}(B_{X})=\lambda_{\min}(A_{X})\big\},
\end{equation}
where
\begin{align}
A_{X}&:=(S^{T}XS)^{-1/2}S^{T}AS(S^{T}XS)^{-1/2},\label{def_Ax}\\
B_{X}&:=(S^{T}XS)^{-1/2}\big(S^{T}AS-S^{T}AP(P^{T}AP)^{-1}P^{T}AS\big)(S^{T}XS)^{-1/2}.\label{def_Bx}
\end{align}
\end{lemma}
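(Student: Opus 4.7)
\medskip
\noindent\textbf{Proof proposal.} The natural approach is to compute $\max_{\mathbf{e}\neq 0}\mu_{X}(PR,\mathbf{e})$ in closed form and then compare it with $\mu_{X}^{\star}$. Since $(S\ P)$ is nonsingular by condition $(\mathbf{C})$, any $\mathbf{e}\in\mathbb{R}^{n}$ can be written uniquely as $\mathbf{e}=S\mathbf{e}_{s}+P\mathbf{e}_{c}$ with $\mathbf{e}_{s}\in\mathbb{R}^{n_{s}}$ and $\mathbf{e}_{c}\in\mathbb{R}^{n_{c}}$. Using $RS=0$ and $RP=I_{n_{c}}$, I would first show that
\begin{displaymath}
(I-Q)\mathbf{e}=S\mathbf{e}_{s},
\end{displaymath}
so that the numerator of $\mu_{X}(PR,\mathbf{e})$ becomes $\mathbf{e}_{s}^{T}(S^{T}XS)\mathbf{e}_{s}$, while the denominator reads
\begin{displaymath}
(A\mathbf{e},\mathbf{e})=\mathbf{e}_{s}^{T}S^{T}AS\mathbf{e}_{s}+2\mathbf{e}_{s}^{T}S^{T}AP\mathbf{e}_{c}+\mathbf{e}_{c}^{T}P^{T}AP\mathbf{e}_{c}.
\end{displaymath}

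Next, I would observe that if $\mathbf{e}_{s}=0$ then $\mu_{X}=0$, so the supremum is attained with $\mathbf{e}_{s}\neq 0$. For fixed $\mathbf{e}_{s}$, the denominator is a strictly convex quadratic in $\mathbf{e}_{c}$ (since $P^{T}AP$ is SPD by nonsingularity of $(S\ P)$ and $A$ being SPD), and its minimum is attained at $\mathbf{e}_{c}=-(P^{T}AP)^{-1}P^{T}AS\mathbf{e}_{s}$. Substituting back gives
\begin{displaymath}
\min_{\mathbf{e}_{c}}(A\mathbf{e},\mathbf{e})=\mathbf{e}_{s}^{T}\big(S^{T}AS-S^{T}AP(P^{T}AP)^{-1}P^{T}AS\big)\mathbf{e}_{s}.
\end{displaymath}
The Schur complement in parentheses is SPD because it is the Schur complement of the SPD matrix $(S\ P)^{T}A(S\ P)$ with respect to its $(2,2)$ block. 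Hence the $\mathbf{e}_{c}$-optimization yields
\begin{displaymath}
\max_{\mathbf{e}\neq 0}\mu_{X}(PR,\mathbf{e})=\max_{\mathbf{e}_{s}\neq 0}\frac{\mathbf{e}_{s}^{T}(S^{T}XS)\mathbf{e}_{s}}{\mathbf{e}_{s}^{T}\big(S^{T}AS-S^{T}AP(P^{T}AP)^{-1}P^{T}AS\big)\mathbf{e}_{s}}.
\end{displaymath}

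A standard generalized-eigenvalue/congruence argument (conjugating by $(S^{T}XS)^{-1/2}$) then identifies the right-hand side with $1/\lambda_{\min}(B_{X})$, where $B_{X}$ is precisely \eqref{def_Bx}. Combining this with $\mu_{X}^{\star}=1/\lambda_{\min}(A_{X})$, which follows from \eqref{opt_value} together with the same congruence applied to $S^{T}AS$, I conclude that $P\in\mathbb{P}_{\star}$ if and only if $\lambda_{\min}(B_{X})=\lambda_{\min}(A_{X})$, proving \eqref{ideal_set2}.

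I do not expect any serious obstacle: once the decomposition $\mathbf{e}=S\mathbf{e}_{s}+P\mathbf{e}_{c}$ is set up, everything reduces to minimizing a convex quadratic and recognizing a generalized Rayleigh quotient. The only point that requires a small justification is the SPD-ness of the Schur complement (so that the maximum in $\mathbf{e}_{s}$ is finite and attained), which follows directly from the SPD-ness of $A$ and the nonsingularity of $(S\ P)$ guaranteed by $(\mathbf{C})$.
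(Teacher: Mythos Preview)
Your proposal is correct and follows essentially the same approach as the paper: decompose $\mathbf{e}=S\mathbf{e}_{s}+P\mathbf{e}_{c}$, reduce the numerator to $\mathbf{e}_{s}^{T}S^{T}XS\mathbf{e}_{s}$, minimize the denominator over $\mathbf{e}_{c}$ to obtain the Schur complement, and identify the resulting generalized Rayleigh quotient with $1/\lambda_{\min}(B_{X})$ before comparing with $\mu_{X}^{\star}=1/\lambda_{\min}(A_{X})$ from~\eqref{opt_value}. The only cosmetic difference is that the paper writes out the block congruence with an explicit unit upper-triangular matrix $U$ to show the Schur complement is SPD, whereas you invoke this as a standard fact.
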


\begin{proof}
Since $Q=PR$ is a projection and $P$ is of full column rank, we have
\begin{displaymath}
\Range(I-Q)=\Null(PR)=\Null(R)=\Range(S),
\end{displaymath}
where we have used the fact that $S$ and $R^{T}$ form an $L^{2}$-orthogonal decomposition of $\mathbb{R}^{n}$. Hence, for any $\mathbf{e}\in\mathbb{R}^{n}$, we have
\begin{displaymath}
\mathbf{e}-PR\mathbf{e}\in\Range(S).
\end{displaymath}
Let $\mathbf{e}_{c}=R\mathbf{e}\in\mathbb{R}^{n_{c}}$. We then have, for some $\mathbf{e}_{s}\in\mathbb{R}^{n_{s}}$,
\begin{displaymath}
\mathbf{e}=S\mathbf{e}_{s}+P\mathbf{e}_{c}.
\end{displaymath}

Note that $(I-Q)P=0$ and $(I-Q)S=S$.
According to~\eqref{g_meas} and~\eqref{def_opt_value}, we have
\begin{align*}
\mu_{X}^{\star}&=\min_{P}\max_{\mathbf{e}\neq 0}\frac{(X(I-Q)\mathbf{e},(I-Q)\mathbf{e})}{(A\mathbf{e},\mathbf{e})}\\
&=\min_{P}\max_{\mathbf{e}_{s}\neq 0}\frac{(S^{T}XS\mathbf{e}_{s},\mathbf{e}_{s})}{\min_{\mathbf{e}_{c}}\big\{(S^{T}AS\mathbf{e}_{s},\mathbf{e}_{s})+2(P^{T}AS\mathbf{e}_{s},\mathbf{e}_{c})+(P^{T}AP\mathbf{e}_{c},\mathbf{e}_{c})\big\}}\\
&=\min_{P}\max_{\mathbf{e}_{s}\neq 0}\frac{(S^{T}XS\mathbf{e}_{s},\mathbf{e}_{s})}{(S^{T}AS\mathbf{e}_{s},\mathbf{e}_{s})-\big((P^{T}AP)^{-1}P^{T}AS\mathbf{e}_{s},P^{T}AS\mathbf{e}_{s}\big)}.
\end{align*}
Due to $(S \ P)$ is nonsingular (see~\eqref{inv_SP}) and $A$ is SPD, it follows that 
\begin{displaymath}
\begin{pmatrix}
S^{T}\\
P^{T}
\end{pmatrix}A
\begin{pmatrix}
S & P
\end{pmatrix}=
\begin{pmatrix}
S^{T}AS & S^{T}AP\\
P^{T}AS & P^{T}AP
\end{pmatrix}
\end{displaymath}
is also SPD. Letting
\begin{displaymath}
U=\begin{pmatrix}
I_{n_{s}} & -S^{T}AP(P^{T}AP)^{-1}\\
0 & I_{n_{c}}
\end{pmatrix},
\end{displaymath}
we then have
\begin{displaymath}
U\begin{pmatrix}
S^{T}AS & S^{T}AP\\
P^{T}AS & P^{T}AP
\end{pmatrix}U^{T}=\begin{pmatrix}
S^{T}AS-S^{T}AP(P^{T}AP)^{-1}P^{T}AS & 0\\
0 & P^{T}AP
\end{pmatrix},
\end{displaymath}
which implies that both $S^{T}AS-S^{T}AP(P^{T}AP)^{-1}P^{T}AS$ and $P^{T}AP$ are SPD. Hence,
\begin{displaymath}
\frac{(S^{T}XS\mathbf{e}_{s},\mathbf{e}_{s})}{(S^{T}AS\mathbf{e}_{s},\mathbf{e}_{s})-\big((P^{T}AP)^{-1}P^{T}AS\mathbf{e}_{s},P^{T}AS\mathbf{e}_{s}\big)}\geq\frac{(S^{T}XS\mathbf{e}_{s},\mathbf{e}_{s})}{(S^{T}AS\mathbf{e}_{s},\mathbf{e}_{s})}, \quad \forall \, \mathbf{e}_{s}\in \mathbb{R}^{n_{s}}\backslash\{0\},
\end{displaymath}
which yields
\begin{displaymath}
\max_{\mathbf{e}_{s}\neq 0}\frac{(S^{T}XS\mathbf{e}_{s},\mathbf{e}_{s})}{(S^{T}AS\mathbf{e}_{s},\mathbf{e}_{s})-\big(S^{T}AP(P^{T}AP)^{-1}P^{T}AS\mathbf{e}_{s},\mathbf{e}_{s}\big)}\geq\max_{\mathbf{e}_{s}\neq 0}\frac{(S^{T}XS\mathbf{e}_{s},\mathbf{e}_{s})}{(S^{T}AS\mathbf{e}_{s},\mathbf{e}_{s})}.
\end{displaymath}
That is,
\begin{displaymath}
\lambda_{\max}\big(\big(S^{T}AS-S^{T}AP(P^{T}AP)^{-1}P^{T}AS\big)^{-1}S^{T}XS\big)\geq\lambda_{\max}\big((S^{T}AS)^{-1}S^{T}XS\big).
\end{displaymath}
Therefore,
\begin{equation}\label{inequ1}
\max_{\mathbf{e}\neq 0}\mu_{X}(PR,\mathbf{e})=\frac{1}{\lambda_{\min}(B_{X})}\geq\frac{1}{\lambda_{\min}(A_{X})}=\mu_{X}^{\star}.
\end{equation}
In view of~\eqref{ideal_set1} and~\eqref{inequ1}, we immediately get the equivalent expression~\eqref{ideal_set2}. 
\end{proof}

The following corollary presents a variant of~\eqref{ideal_set2}.

\begin{corollary}
The set $\mathbb{P}_{\star}$ given by~\eqref{ideal_set2} can be expressed as
\begin{displaymath}
\mathbb{P}_{\star}=\Big\{P:\sigma_{\min}\big(A^{1/2}\big(I-P(P^{T}AP)^{-1}P^{T}A\big)S(S^{T}XS)^{-1/2}\big)=\sigma_{\min}\big(A^{1/2}S(S^{T}XS)^{-1/2}\big)\Big\},
\end{displaymath}
where $\sigma_{\min}(\cdot)$ denotes the smallest singular value of a matrix.
\end{corollary}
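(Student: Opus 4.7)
The plan is to show that the corollary is simply a repackaging of the characterization $\mathbb{P}_{\star}=\{P:\lambda_{\min}(B_{X})=\lambda_{\min}(A_{X})\}$ from the previous lemma, by exhibiting $A_X$ and $B_X$ as Gramians $N^{T}N$ whose smallest singular values agree with the square roots of their smallest eigenvalues.

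First, observe that $A_{X}=N_{1}^{T}N_{1}$ where $N_{1}:=A^{1/2}S(S^{T}XS)^{-1/2}$, so $\lambda_{\min}(A_{X})=\sigma_{\min}(N_{1})^{2}$. The substantive step is the analogous identity for $B_{X}$. Set $\Pi_{A}:=P(P^{T}AP)^{-1}P^{T}A$ and $N_{2}:=A^{1/2}(I-\Pi_{A})S(S^{T}XS)^{-1/2}$. I would verify that $\Pi_{A}$ is the $A$-orthogonal projection onto $\Range(P)$ by checking $\Pi_{A}^{2}=\Pi_{A}$ and $\Pi_{A}^{T}A=A\Pi_{A}$. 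Consequently $(I-\Pi_{A})$ is also an $A$-orthogonal projection, and
\begin{equation*}
(I-\Pi_{A})^{T}A(I-\Pi_{A})=A(I-\Pi_{A})(I-\Pi_{A})=A(I-\Pi_{A})=A-AP(P^{T}AP)^{-1}P^{T}A.
\end{equation*}
Sandwiching by $S^{T}$ and $S$ and pre/post multiplying by $(S^{T}XS)^{-1/2}$ yields $N_{2}^{T}N_{2}=B_{X}$, hence $\lambda_{\min}(B_{X})=\sigma_{\min}(N_{2})^{2}$.

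Finally, since $\sigma_{\min}(\cdot)\geq 0$, the equality $\lambda_{\min}(B_{X})=\lambda_{\min}(A_{X})$ is equivalent to $\sigma_{\min}(N_{2})=\sigma_{\min}(N_{1})$, which is exactly the condition defining the set on the right-hand side of the corollary. Combined with the previous lemma, this proves the claim.

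The only place that requires a moment of care is the simplification $(I-\Pi_{A})^{T}A(I-\Pi_{A})=A(I-\Pi_{A})$; once one recognizes $\Pi_{A}$ as an $A$-orthogonal projection (so that $\Pi_{A}^{T}A=A\Pi_{A}$ and $(I-\Pi_{A})^{2}=I-\Pi_{A}$), the rest of the argument is a direct translation between the eigenvalue and singular-value formulations.
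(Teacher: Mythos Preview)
Your proof is correct and follows essentially the same approach as the paper: both arguments exhibit $A_{X}$ and $B_{X}$ as Gramians of the matrices appearing in the corollary and then invoke the characterization $\mathbb{P}_{\star}=\{P:\lambda_{\min}(B_{X})=\lambda_{\min}(A_{X})\}$. The only cosmetic difference is that the paper factors $B_{X}$ through the $L^{2}$-orthogonal projection $I-A^{1/2}P(P^{T}AP)^{-1}P^{T}A^{1/2}$, whereas you use the $A$-orthogonal projection $\Pi_{A}=P(P^{T}AP)^{-1}P^{T}A$; since $A^{1/2}(I-\Pi_{A})=(I-A^{1/2}P(P^{T}AP)^{-1}P^{T}A^{1/2})A^{1/2}$, the two computations are equivalent.
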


\begin{proof}
From~\eqref{def_Ax}, we have
\begin{displaymath}
A_{X}=\big((S^{T}XS)^{-1/2}S^{T}A^{1/2}\big)\big(A^{1/2}S(S^{T}XS)^{-1/2}\big),
\end{displaymath}
which implies
\begin{displaymath}
\lambda_{\min}(A_{X})=\sigma_{\min}^{2}\big(A^{1/2}S(S^{T}XS)^{-1/2}\big).
\end{displaymath}
From~\eqref{def_Bx}, we have
\begin{displaymath}
B_{X}=(S^{T}XS)^{-1/2}S^{T}A^{1/2}\big(I-A^{1/2}P(P^{T}AP)^{-1}P^{T}A^{1/2}\big)A^{1/2}S(S^{T}XS)^{-1/2}.
\end{displaymath}
Note that $I-A^{1/2}P(P^{T}AP)^{-1}P^{T}A^{1/2}$ is an $L^{2}$-orthogonal projection. We then have
\begin{align*}
\lambda_{\min}(B_{X})&=\sigma_{\min}^{2}\big(\big(I-A^{1/2}P(P^{T}AP)^{-1}P^{T}A^{1/2}\big)A^{1/2}S(S^{T}XS)^{-1/2}\big)\\
&=\sigma_{\min}^{2}\big(A^{1/2}\big(I-P(P^{T}AP)^{-1}P^{T}A\big)S(S^{T}XS)^{-1/2}\big).
\end{align*}
The desired result follows immediately from~\eqref{ideal_set2}. 
\end{proof}

Let
\begin{equation}\label{def_B}
B=A-AP(P^{T}AP)^{-1}P^{T}A.
\end{equation}
It is not difficult to see that $\lambda_{\min}(A_{X})$ and $\lambda_{\min}(B_{X})$ are the smallest eigenvalues of the generalized eigenvalue problems 
\begin{displaymath}
S^{T}AS\mathbf{v}=\lambda S^{T}XS\mathbf{v} \quad \text{and} \quad S^{T}BS\mathbf{v}=\nu S^{T}XS\mathbf{v},
\end{displaymath}
respectively. Furthermore, the sets $\mathbb{P}_{1}$ and $\mathbb{P}_{2}$ (see~\eqref{eqv_P1} and ~\eqref{eqv_P2}) can be equivalently defined as~\eqref{def_P1} and~\eqref{def_P2} below, respectively.

In what follows, for convenience, we define
\begin{align}
\mathbb{P}_{0}&:=\Big\{P:P^{T}AS=0\Big\},\label{def_P0}\\
\mathbb{P}_{1}&:=\Big\{P: \Null\big(P^{T}AS(S^{T}XS)^{-1/2}\big) \cap \big\{\mathbf{v}\in\mathbb{R}^{n_{s}}\backslash\{0\}:A_{X}\mathbf{v}=\lambda_{\min}(A_{X})\mathbf{v}\big\}\neq\varnothing\Big\},\label{def_P1}\\
\mathbb{P}_{2}&:=\Big\{P:\Null\big(P^{T}AS(S^{T}XS)^{-1/2}\big) \cap \big\{\mathbf{v}\in\mathbb{R}^{n_{s}}\backslash\{0\}:B_{X}\mathbf{v}=\lambda_{\min}(B_{X})\mathbf{v}\big\}\neq\varnothing\Big\}. \label{def_P2}
\end{align}
The following theorem provides sufficient, necessary, and equivalent conditions of the ideal interpolation.

\begin{theorem}\label{main_theo}
Under the condition $(\mathbf{C})$, it holds that
\begin{equation}
\mathbb{P}_{0}\subseteq\mathbb{P}_{2}=\mathbb{P}_{\star}\subseteq\mathbb{P}_{1},
\end{equation}
where $\mathbb{P}_{\star}$, $\mathbb{P}_{0}$, $\mathbb{P}_{1}$, and $\mathbb{P}_{2}$ are given by~\eqref{ideal_set2}, \eqref{def_P0}, \eqref{def_P1}, and~\eqref{def_P2}, respectively.
\end{theorem}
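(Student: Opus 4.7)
The plan is to reduce everything to a single algebraic identity and then exploit Loewner ordering. Expanding the definitions \eqref{def_Ax}--\eqref{def_Bx} gives
\begin{equation*}
A_X - B_X = C^T C, \qquad C := (P^T A P)^{-1/2} P^T A S (S^T X S)^{-1/2},
\end{equation*}
from which three consequences drive the whole proof: (i) $B_X \preceq A_X$ in the Loewner order, so $\lambda_{\min}(B_X) \leq \lambda_{\min}(A_X)$; (ii) since $(P^T A P)^{-1/2}$ is invertible, $\Null(A_X - B_X) = \Null(C) = \Null\bigl(P^T A S (S^T X S)^{-1/2}\bigr) =: \mathcal{N}$; and (iii) by \eqref{ideal_set2}, $P \in \mathbb{P}_\star$ iff equality holds in (i). Write $W_A$, $W_B$ for the eigenspaces of $A_X$, $B_X$ at their minimum eigenvalues.

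First I would dispose of $\mathbb{P}_0 \subseteq \mathbb{P}_2$ by the trivial observation that $P^T A S = 0$ forces $C = 0$, so $\mathcal{N} = \mathbb{R}^{n_s}$ and $B_X = A_X$; then any nonzero eigenvector in $W_B$ automatically lies in $\mathcal{N} \cap W_B$.

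Next I would prove $\mathbb{P}_2 \subseteq \mathbb{P}_\star$ by picking a nonzero $\mathbf{v} \in \mathcal{N} \cap W_B$ and using (ii) to conclude $A_X \mathbf{v} = B_X \mathbf{v} = \lambda_{\min}(B_X)\mathbf{v}$. This exhibits $\lambda_{\min}(B_X)$ as an eigenvalue of $A_X$, forcing $\lambda_{\min}(B_X) \geq \lambda_{\min}(A_X)$, and combined with (i) this gives equality.

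For the converse inclusion $\mathbb{P}_\star \subseteq \mathbb{P}_2 \cap \mathbb{P}_1$, I would pick any nonzero $\mathbf{v} \in W_A$ and squeeze
\begin{equation*}
\lambda \|\mathbf{v}\|^2 \leq \mathbf{v}^T B_X \mathbf{v} \leq \mathbf{v}^T A_X \mathbf{v} = \lambda \|\mathbf{v}\|^2,
\end{equation*}
where $\lambda := \lambda_{\min}(A_X) = \lambda_{\min}(B_X)$. Equality throughout shows simultaneously that $\mathbf{v}$ attains the minimum of the Rayleigh quotient of $B_X$, hence $\mathbf{v} \in W_B$, and that $\mathbf{v}^T (A_X - B_X)\mathbf{v} = 0$, hence by (ii) $\mathbf{v} \in \mathcal{N}$. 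Thus $\mathbf{v} \in (\mathcal{N} \cap W_A) \cap (\mathcal{N} \cap W_B)$, so $P$ lies in $\mathbb{P}_1 \cap \mathbb{P}_2$. The only genuinely subtle step is spotting the factorization $A_X - B_X = C^T C$ and recognizing that the zero-set of this PSD difference is exactly $\mathcal{N}$; once that is in place, the remaining arguments are elementary Rayleigh-quotient and Loewner-ordering manipulations.
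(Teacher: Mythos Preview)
Your proof is correct and follows essentially the same route as the paper's: both rest on the positive-semidefiniteness of $A_X-B_X$, the identification of its kernel with $\mathcal{N}=\Null\bigl(P^{T}AS(S^{T}XS)^{-1/2}\bigr)$, and Rayleigh-quotient comparisons. The only organizational difference is that the paper first proves $\mathbb{P}_{\star}\subseteq\mathbb{P}_{1}$ by contradiction and then bootstraps to $\mathbb{P}_{\star}\subseteq\mathbb{P}_{2}$ using that inclusion, whereas your single squeeze argument delivers both inclusions at once (indeed it yields the slightly stronger statement $W_A\subseteq\mathcal{N}\cap W_B$ when $P\in\mathbb{P}_{\star}$).
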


\begin{proof}
(i) ``$\mathbb{P}_{0}\subseteq\mathbb{P}_{2}$'': If $P\in\mathbb{P}_{0}$, then $P^{T}AS=0$ and hence $\Null\big(P^{T}AS(S^{T}XS)^{-1/2}\big)=\mathbb{R}^{n_{s}}$. Obviously, 
\begin{displaymath}
\Null\big(P^{T}AS(S^{T}XS)^{-1/2}\big)\cap\big\{\mathbf{v}\in\mathbb{R}^{n_{s}}\backslash\{0\}:B_{X}\mathbf{v}=\lambda_{\min}(B_{X})\mathbf{v}\big\}\neq\varnothing,
\end{displaymath}
that is, $P\in\mathbb{P}_{2}$, which implies that $\mathbb{P}_{0}\subseteq\mathbb{P}_{2}$.

(ii) ``$\mathbb{P}_{\star}\subseteq\mathbb{P}_{1}$'': From the definitions~\eqref{def_Ax} and~\eqref{def_Bx}, we have that both $A_{X}$ and $B_{X}$ are SPD and $A_{X}-B_{X}$ is symmetric positive semidefinite (SPSD). Hence, it always holds that
\begin{displaymath}
\lambda_{\min}(B_{X})\leq\lambda_{\min}(A_{X}).
\end{displaymath}
Suppose that there exists a $P\in\mathbb{P}_{\star}$ such that $P\notin\mathbb{P}_{1}$. Then
\begin{displaymath}
\Null\big(P^{T}AS(S^{T}XS)^{-1/2}\big)\cap\big\{\mathbf{v}\in\mathbb{R}^{n_{s}}\backslash\{0\}:A_{X}\mathbf{v}=\lambda_{\min}(A_{X})\mathbf{v}\big\}=\varnothing.
\end{displaymath}
Hence, for any $\mathbf{v}_{0}\in\big\{\mathbf{v}\in\mathbb{R}^{n_{s}}\backslash\{0\}:A_{X}\mathbf{v}=\lambda_{\min}(A_{X})\mathbf{v}\big\}$, we have
\begin{displaymath}
\lambda_{\min}(A_{X})=\frac{\mathbf{v}_{0}^{T}A_{X}\mathbf{v}_{0}}{\mathbf{v}_{0}^{T}\mathbf{v}_{0}}>\frac{\mathbf{v}_{0}^{T}B_{X}\mathbf{v}_{0}}{\mathbf{v}_{0}^{T}\mathbf{v}_{0}}\geq\lambda_{\min}(B_{X}).
\end{displaymath}
According to~\eqref{ideal_set2}, we deduce that 
$P\notin\mathbb{P}_{\star}$, which is a contradiction. In other words, for any $P\in\mathbb{P}_{\star}$, we have $P\in\mathbb{P}_{1}$, which yields $\mathbb{P}_{\star}\subseteq\mathbb{P}_{1}$.

(iii) ``$\mathbb{P}_{2}=\mathbb{P}_{\star}$'': If $P\in\mathbb{P}_{2}$, then
\begin{displaymath}
\Null\big(P^{T}AS(S^{T}XS)^{-1/2}\big)\cap\big\{\mathbf{v}\in\mathbb{R}^{n_{s}}\backslash\{0\}:B_{X}\mathbf{v}=\lambda_{\min}(B_{X})\mathbf{v}\big\}\neq\varnothing.
\end{displaymath}
Hence, for any $\mathbf{v}_{1}\in\Null\big(P^{T}AS(S^{T}XS)^{-1/2}\big)\cap\big\{\mathbf{v}\in\mathbb{R}^{n_{s}}\backslash\{0\}:B_{X}\mathbf{v}=\lambda_{\min}(B_{X})\mathbf{v}\big\}$, we have
\begin{displaymath}
A_{X}\mathbf{v}_{1}=B_{X}\mathbf{v}_{1}=\lambda_{\min}(B_{X})\mathbf{v}_{1}.
\end{displaymath}
This shows that $\lambda_{\min}(B_{X})$ is an eigenvalue of $A_{X}$ and hence
\begin{displaymath}
\lambda_{\min}(B_{X})\geq\lambda_{\min}(A_{X}).
\end{displaymath}
Because $\lambda_{\min}(B_{X})\leq\lambda_{\min}(A_{X})$, we get from~\eqref{ideal_set2} that $P\in\mathbb{P}_{\star}$, which yields $\mathbb{P}_{2}\subseteq\mathbb{P}_{\star}$.

On the other hand, if $P\in\mathbb{P}_{\star}$, then
\begin{displaymath}
\lambda_{\min}(B_{X})=\lambda_{\min}(A_{X}).
\end{displaymath} 
Since $\mathbb{P}_{\star}\subseteq\mathbb{P}_{1}$, we obtain
\begin{displaymath}
\Null\big(P^{T}AS(S^{T}XS)^{-1/2}\big)\cap\big\{\mathbf{v}\in\mathbb{R}^{n_{s}}\backslash\{0\}:A_{X}\mathbf{v}=\lambda_{\min}(A_{X})\mathbf{v}\big\}\neq\varnothing.
\end{displaymath}
Then, for any $\mathbf{v}_{2}\in\Null\big(P^{T}AS(S^{T}XS)^{-1/2}\big)\cap\big\{\mathbf{v}\in\mathbb{R}^{n_{s}}\backslash\{0\}:A_{X}\mathbf{v}=\lambda_{\min}(A_{X})\mathbf{v}\big\}$, we have
\begin{displaymath}
B_{X}\mathbf{v}_{2}=A_{X}\mathbf{v}_{2}=\lambda_{\min}(A_{X})\mathbf{v}_{2}=\lambda_{\min}(B_{X})\mathbf{v}_{2},
\end{displaymath}
which yields 
\begin{displaymath}
\mathbf{v}_{2}\in\Null\big(P^{T}AS(S^{T}XS)^{-1/2}\big)\cap\big\{\mathbf{v}\in\mathbb{R}^{n_{s}}\backslash\{0\}:B_{X}\mathbf{v}=\lambda_{\min}(B_{X})\mathbf{v}\big\}.
\end{displaymath}
That is, $P\in\mathbb{P}_{2}$, which yields $\mathbb{P}_{\star}\subseteq\mathbb{P}_{2}$. This completes the proof.
\end{proof}

It is well-known that a successful TG (or MG) algorithm should establish a balance between the smoother $M$ and the coarse-space $\Range(P)$. The definition of $\mathbb{P}_{2}$ (or, equivalently, $\mathbb{P}_{\star}$) has reflected such a wisdom. That is to say, we should  take the smoother $M$ (noting that $X$ typically relies on $M$) into account in order to select an ideal interpolation $P$.

If $P^{T}AS$ is of full column rank, then $\Null(P^{T}AS)=\{0\}$,
which implies
\begin{displaymath}
\Null(P^{T}AS)\cap\big\{\mathbf{v}\in\mathbb{R}^{n_{s}}\backslash\{0\}:S^{T}BS\mathbf{v}=\lambda_{\min}(B_{X})S^{T}XS\mathbf{v}\big\}=\varnothing.
\end{displaymath}
Hence, if $P$ is an ideal interpolation, then $P^{T}AS$ cannot have full column rank. On the basis of this observation and Theorem~\ref{main_theo}, we can obtain the following corollary.

\begin{corollary}\label{R_T}
Assume that $RAS\in\mathbb{R}^{n_{c}\times n_{s}}$ is not of full column rank and
\begin{displaymath}
S^{T}XS=\alpha\big(S^{T}AS-S^{T}AR^{T}(RAR^{T})^{-1}RAS\big)
\end{displaymath}
for some $\alpha>0$. Then $P=R^{T}$ is an ideal interpolation.
\end{corollary}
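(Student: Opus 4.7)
The plan is to apply Theorem~\ref{main_theo} by verifying that $P=R^{T}$ belongs to the set $\mathbb{P}_{2}$ (which equals $\mathbb{P}_{\star}$). Since the set $\mathbb{P}_{2}$ consists of those $P$ for which $\Null(P^{T}AS)$ has a non-trivial intersection with the $\lambda_{\min}(B_{X})$-eigenspace of the generalized eigenvalue problem $S^{T}BSv = \lambda S^{T}XSv$, I would establish the result by showing that the hypotheses on $RAS$ and on $S^{T}XS$ make each of these two ingredients trivially nice: the nullspace is non-trivial, and the eigenspace is all of $\mathbb{R}^{n_{s}}\setminus\{0\}$.

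First I would substitute $P=R^{T}$ into the definition \eqref{def_B} of $B$, obtaining
\begin{displaymath}
B = A - AR^{T}(RAR^{T})^{-1}RA,
\end{displaymath}
so that
\begin{displaymath}
S^{T}BS = S^{T}AS - S^{T}AR^{T}(RAR^{T})^{-1}RAS.
\end{displaymath}
The hypothesis then reads $S^{T}XS = \alpha S^{T}BS$. Since $S^{T}XS$ is SPD and $\alpha>0$, $S^{T}BS$ is SPD as well, and substituting into \eqref{def_Bx} gives
\begin{displaymath}
B_{X} = (S^{T}XS)^{-1/2}\,\tfrac{1}{\alpha}S^{T}XS\,(S^{T}XS)^{-1/2} = \tfrac{1}{\alpha}I_{n_{s}}.
\end{displaymath}
Consequently $\lambda_{\min}(B_{X})=1/\alpha$, and the generalized eigenvalue problem $S^{T}BSv=\lambda_{\min}(B_{X})S^{T}XSv$ reduces to the trivial identity $S^{T}BSv=S^{T}BSv$, so every non-zero $v\in\mathbb{R}^{n_{s}}$ is a $\lambda_{\min}(B_{X})$-eigenvector.

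Next I would use the assumption that $RAS$ is not of full column rank: since $P^{T}AS = RAS$, this gives a non-zero vector $v_{0}\in\Null(P^{T}AS)$. Combined with the previous step, $v_{0}$ lies in the intersection required by the definition \eqref{eqv_P2} of $\mathbb{P}_{2}$, hence $P=R^{T}\in\mathbb{P}_{2}$. An appeal to the equality $\mathbb{P}_{2}=\mathbb{P}_{\star}$ from Theorem~\ref{main_theo} then concludes that $P=R^{T}$ is an ideal interpolation.

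There is no substantial obstacle here; the only cautionary point is a bookkeeping check that $P=R^{T}$ is admissible under condition $(\mathbf{C})$, which implicitly requires $RP=I_{n_{c}}$, i.e.\ $RR^{T}=I_{n_{c}}$ (consistent with the classical AMG setting displayed earlier, where $R=(0\ I_{n_{c}})$). Once this is recorded, the proof is a direct verification, the key conceptual content being that the spectral equivalence assumption $S^{T}XS = \alpha S^{T}BS$ collapses $B_{X}$ to a scalar matrix so the eigenspace condition becomes vacuous.
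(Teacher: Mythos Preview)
Your proposal is correct and follows essentially the same approach as the paper: the paper's own proof simply states that the result is ``straightforward by using $\mathbb{P}_{\star}=\mathbb{P}_{2}$ and the definition of $\mathbb{P}_{2}$,'' and you have spelled out precisely those details (identifying $S^{T}BS$ with the right-hand side of the hypothesis, collapsing $B_{X}$ to $\tfrac{1}{\alpha}I_{n_{s}}$, and using the rank assumption on $RAS$ to produce the required intersection). Your added remark that $P=R^{T}$ implicitly requires $RR^{T}=I_{n_{c}}$ is a worthwhile observation not made explicit in the paper.
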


\begin{proof}
The proof is straightforward by using $\mathbb{P}_{\star}=\mathbb{P}_{2}$ and the definition of $\mathbb{P}_{2}$.
\end{proof}

Traditionally, to define a TG method, the smoother $M$ is pre-selected to provide an $A$-convergent iterative method, such as weighted Jacobi,  Gauss--Seidel, incomplete factorization, overlapping Schwarz methods, etc. Thus, the main task of a TG method is to construct a ``good'' interpolation $P$. On the other hand, for a given interpolation $P$, we can select an appropriate smoother $M$ to ensure that $P$ is ideal; see the following remark for an example.

\begin{remark}\label{sparse_P}\rm
Let $A$ be of the two-by-two block form
\begin{displaymath}
A=\begin{pmatrix}
A_\text{ff} & A_\text{fc}\\
A_\text{cf} & A_\text{cc}
\end{pmatrix},
\end{displaymath}
where $A_\text{ff}\in\mathbb{R}^{n_{s}\times n_{s}}$ $(n_{s}=n-n_{c})$ and $A_\text{cc}\in\mathbb{R}^{n_{c}\times n_{c}}$ with $n_{s}>n_{c}$. Taking
\begin{displaymath}
R=\begin{pmatrix}
0 & I_{n_{c}}
\end{pmatrix} \quad \text{and} \quad S=\begin{pmatrix}
I_{n_{s}}\\
0
\end{pmatrix} ,
\end{displaymath}
we then have
\begin{displaymath}
S^{T}AS-S^{T}AR^{T}(RAR^{T})^{-1}RAS=A_\text{ff}-A_\text{fc}A_\text{cc}^{-1}A_\text{cf},
\end{displaymath}
which is the Schur complement of $A_\text{cc}$ in $A$. Obviously, $RAS$ is not of full column rank due to $n_{s}>n_{c}$. Choosing
\begin{displaymath}
X=\begin{pmatrix}
\alpha\big(A_\text{ff}-A_\text{fc}A_\text{cc}^{-1}A_\text{cf}\big) & \ast\\
\ast & \ast
\end{pmatrix},
\end{displaymath}
we deduce from Corollary~\ref{R_T} that
\begin{displaymath}
P=\begin{pmatrix}
0\\
I_{n_{c}}
\end{pmatrix}
\end{displaymath}
is an ideal interpolation. If we set $X=\frac{1}{2}(M+M^{T})$, then $M$ can be chosen as the following forms:
\begin{align*}
M_{1}&=\begin{pmatrix}
\alpha_{1}\big(A_\text{ff}-A_\text{fc}A_\text{cc}^{-1}A_\text{cf}\big) & 0\\
0 & \alpha_{1}\diag(A_\text{cc})
\end{pmatrix}, \quad M_{2}=\begin{pmatrix}
\alpha_{2}\big(A_\text{ff}-A_\text{fc}A_\text{cc}^{-1}A_\text{cf}\big) & 0\\
0 & \alpha_{2}A_\text{cc}
\end{pmatrix},\\
M_{3}&=\begin{pmatrix}
\alpha_{3}\big(A_\text{ff}-A_\text{fc}A_\text{cc}^{-1}A_\text{cf}\big) & 0\\
A_\text{cf} & \alpha_{3}\diag(A_\text{cc})
\end{pmatrix}, \quad M_{4}=\begin{pmatrix}
\alpha_{4}\big(A_\text{ff}-A_\text{fc}A_\text{cc}^{-1}A_\text{cf}\big) & 0\\
A_\text{cf} & \alpha_{4}A_\text{cc}
\end{pmatrix}.
\end{align*}
To guarantee the iteration~\eqref{iter} is $A$-convergent, we can select the parameters $\alpha_{i}$ so that $M_{i}+M_{i}^{T}-A$ $(i=1,\ldots,4)$ are SPD's.
\end{remark}

Example~\ref{count_exam} has demonstrated that the set $\mathbb{P}_{\star}\backslash\mathbb{P}_{0}$ may be nonempty, even if $X$ is a scalar matrix or the diagonal of $A$. However, if $S^{T}XS=\alpha S^{T}AS$ for some $\alpha>0$, it holds that $\mathbb{P}_{\star}=\mathbb{P}_{0}$ (i.e., an interpolation $P_{\star}$ is ideal if and only if $P_{\star}^{T}AS=0$), which is proved in the following theorem.

\begin{theorem}\label{add_cond}
Under the condition $(\mathbf{C})$, if $S^{T}XS=\alpha S^{T}AS$ for some $\alpha>0$, then
$\mathbb{P}_{\star}=\mathbb{P}_{0}$.
\end{theorem}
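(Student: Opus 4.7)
The plan is to exploit the strong simplification that the hypothesis $S^{T}XS=\alpha S^{T}AS$ confers on the matrix $A_{X}$: it collapses to a scalar multiple of the identity, and the characterization $\mathbb{P}_{\star}=\{P:\lambda_{\min}(B_{X})=\lambda_{\min}(A_{X})\}$ from~\eqref{ideal_set2} reduces to a rank/nullity condition on $P^{T}AS$.

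First, I would substitute the hypothesis directly into~\eqref{def_Ax}. Since $(S^{T}XS)^{-1/2}=\alpha^{-1/2}(S^{T}AS)^{-1/2}$, it follows immediately that
\begin{displaymath}
A_{X}=\frac{1}{\alpha}(S^{T}AS)^{-1/2}S^{T}AS(S^{T}AS)^{-1/2}=\frac{1}{\alpha}I_{n_{s}},
\end{displaymath}
so $\lambda_{\min}(A_{X})=1/\alpha$ and in fact every nonzero vector in $\mathbb{R}^{n_{s}}$ is an eigenvector of $A_{X}$ with this eigenvalue.

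Next, I would rewrite $B_{X}$ from~\eqref{def_Bx} in the suggestive form $B_{X}=A_{X}-C^{T}C$, where
\begin{displaymath}
C:=(P^{T}AP)^{-1/2}P^{T}AS(S^{T}XS)^{-1/2}.
\end{displaymath}
Note that $P^{T}AP$ is SPD because $A$ is SPD and $P$ has full column rank (implied by $RP=I_{n_{c}}$), so $(P^{T}AP)^{-1/2}$ is well-defined and invertible; likewise $(S^{T}XS)^{-1/2}$ is invertible. Consequently $C^{T}C\succeq 0$ and
\begin{displaymath}
\lambda_{\min}(B_{X})=\frac{1}{\alpha}-\lambda_{\max}(C^{T}C).
\end{displaymath}

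The equivalence now unfolds in one line of equivalences: $P\in\mathbb{P}_{\star}\iff\lambda_{\min}(B_{X})=1/\alpha\iff\lambda_{\max}(C^{T}C)=0\iff C=0\iff P^{T}AS=0\iff P\in\mathbb{P}_{0}$, where the invertibility of the flanking factors is used in the fourth equivalence. This yields $\mathbb{P}_{\star}=\mathbb{P}_{0}$. There is no serious obstacle in the argument; the only point that requires a word of justification is the invertibility of $(P^{T}AP)^{-1/2}$, which as noted follows from condition $(\mathbf{C})$. Alternatively, one could argue via $\mathbb{P}_{2}$ using Theorem~\ref{main_theo}: once $A_{X}=(1/\alpha)I$, the eigenspace of $B_{X}$ for $\lambda_{\min}(B_{X})$ must lie in the eigenspace of $A_{X}$ for $\lambda_{\min}(A_{X})$ (which is all of $\mathbb{R}^{n_{s}}$) and simultaneously in $\Null(P^{T}AS(S^{T}XS)^{-1/2})$, forcing $P^{T}AS=0$; but the direct eigenvalue computation above is cleaner.
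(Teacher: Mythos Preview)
Your proof is correct and follows essentially the same approach as the paper's: both substitute the hypothesis to obtain $A_{X}=\frac{1}{\alpha}I_{n_{s}}$, use the characterization~\eqref{ideal_set2} to reduce the problem to showing that the SPSD matrix $A_{X}-B_{X}$ has $\lambda_{\max}$ equal to zero, and conclude $P^{T}AS=0$ from the invertibility of the flanking factors. The only cosmetic differences are that the paper normalizes to $\alpha=1$ at the outset and writes out the SPSD matrix directly rather than factoring it as $C^{T}C$.
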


\begin{proof}
Without loss of generality, we assume that $\alpha=1$. If $S^{T}XS=S^{T}AS$, then
\begin{displaymath}
A_{X}=I \quad \text{and} \quad B_{X}=I-(S^{T}AS)^{-1/2}S^{T}AP(P^{T}AP)^{-1}P^{T}AS(S^{T}AS)^{-1/2}.
\end{displaymath}
In this case, from~\eqref{ideal_set2} we have that $\mathbb{P}_{\star}$ can be expressed as
\begin{displaymath}
\mathbb{P}_{\star}=\Big\{P:\lambda_{\max}\big((S^{T}AS)^{-1/2}S^{T}AP(P^{T}AP)^{-1}P^{T}AS(S^{T}AS)^{-1/2}\big)=0\Big\}.
\end{displaymath}
Note that $(S^{T}AS)^{-1/2}S^{T}AP(P^{T}AP)^{-1}P^{T}AS(S^{T}AS)^{-1/2}$ is SPSD. Hence, if $P\in\mathbb{P}_{\star}$, then the eigenvalues of $(S^{T}AS)^{-1/2}S^{T}AP(P^{T}AP)^{-1}P^{T}AS(S^{T}AS)^{-1/2}$ are all zero, which implies
\begin{displaymath}
(S^{T}AS)^{-1/2}S^{T}AP(P^{T}AP)^{-1}P^{T}AS(S^{T}AS)^{-1/2}=0.
\end{displaymath}
This shows that $P^{T}AS=0$ (i.e., $P\in\mathbb{P}_{0}$), which yields $\mathbb{P}_{\star}\subseteq\mathbb{P}_{0}$. The desired result follows from the fact $\mathbb{P}_{0}\subseteq\mathbb{P}_{\star}$.
\end{proof}

\begin{remark}\rm
We now give an example to illustrate that the above condition $S^{T}XS=\alpha S^{T}AS$ can be satisfied by choosing appropriate $M$ and $X$. Let $A$ be partitioned as the form 
\begin{displaymath}
A=D+L+L^{T},
\end{displaymath} where $D$ and $L$ denote the diagonal and strictly lower triangular parts of $A$, respectively. For any $\varepsilon>0$, we set
\begin{displaymath}
M=\bigg(\frac{1}{2}+\varepsilon\bigg)D+(1+2\varepsilon)L \quad \text{and} \quad X=\frac{1}{2}(M+M^{T}).
\end{displaymath}
In this case, $M+M^{T}-A=2\varepsilon A$ is SPD and hence the relaxation process~\eqref{iter} is $A$-convergent. And, for any $S$, it holds that
\begin{displaymath} S^{T}XS=\bigg(\frac{1}{2}+\varepsilon\bigg)S^{T}AS.
\end{displaymath}
\end{remark}

\section{A new expression for the ideal interpolation in $\mathbb{P}_{0}$}
\label{sec:Exp}
\setcounter{equation}{0}

In view of Theorem~\ref{main_theo}, we conclude that the condition $P_{\star}^{T}AS=0$ is sufficient to guarantee that $P_{\star}$ is an ideal interpolation, but it is not necessary in general. Example~\ref{count_exam} has shown that the ideal interpolation may not be unique. However, if we attempt to seek the ideal interpolation $P_{\star}$ in $\mathbb{P}_{0}$, then $P_{\star}$ is unique as long as $R$ is fixed. Moreover, $P_{\star}$ has the following explicit expression which does not involve the auxiliary operator $S$ (noting that the measure~\eqref{g_meas} does not involve the operator $S$).

\begin{theorem}\label{main_theo1}
Under the condition $(\mathbf{C})$, the unique ideal interpolation $P_{\star}\in\mathbb{P}_{0}$ can be expressed as 
\begin{equation}\label{new_exp}
P_{\star}=A^{-1}R^{T}(RA^{-1}R^{T})^{-1}.
\end{equation}
\end{theorem}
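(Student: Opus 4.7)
My plan is to prove Theorem~\ref{main_theo1} in two stages: first establish uniqueness of an ideal interpolation in $\mathbb{P}_{0}$ once $R$ is fixed, and then verify that the proposed closed-form expression lies in $\mathbb{P}_{0}$ (and is consistent with the compatibility relation $RP=I_{n_{c}}$).

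For uniqueness, I would start from two candidates $P_{1},P_{2}\in\mathbb{P}_{0}$, both required to satisfy $RP_{i}=I_{n_{c}}$ and $P_{i}^{T}AS=0$. Subtracting gives $R(P_{1}-P_{2})=0$, so every column of $P_{1}-P_{2}$ lies in $\Null(R)$. Since $S$ and $R^{T}$ form an $L^{2}$-orthogonal decomposition of $\mathbb{R}^{n}$, we have $\Null(R)=\Range(S)$, and therefore $P_{1}-P_{2}=SY$ for some $Y\in\mathbb{R}^{n_{s}\times n_{c}}$. Left-multiplying by $S^{T}A$ and using $(P_{1}-P_{2})^{T}AS=0$ yields $Y^{T}S^{T}AS=0$; since $S^{T}AS$ is SPD (this was already observed en route to Lemma bounding $\mu_{X}^{\star}$), this forces $Y=0$, hence $P_{1}=P_{2}$.

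For the explicit form, I would first note that $RA^{-1}R^{T}$ is SPD: $A^{-1}$ is SPD, and $R$ has full row rank because $RP=I_{n_{c}}$ implies $R$ is surjective. Hence $(RA^{-1}R^{T})^{-1}$ exists and $P_{\star}=A^{-1}R^{T}(RA^{-1}R^{T})^{-1}$ is well defined. A direct check gives
\begin{equation*}
RP_{\star}=RA^{-1}R^{T}(RA^{-1}R^{T})^{-1}=I_{n_{c}},
\end{equation*}
so $P_{\star}$ is a legitimate prolongation. Next,
\begin{equation*}
P_{\star}^{T}AS=(RA^{-1}R^{T})^{-1}RA^{-1}\cdot AS=(RA^{-1}R^{T})^{-1}RS=0,
\end{equation*}
using $RS=0$. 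Therefore $P_{\star}\in\mathbb{P}_{0}$, and by the uniqueness argument above it is the unique element of $\mathbb{P}_{0}$ compatible with the given $R$.

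There is essentially no hard step here: the main potential pitfall is bookkeeping about which matrices are invertible, and verifying that the two defining conditions $RP=I_{n_{c}}$ and $P^{T}AS=0$ together pin down $P$ uniquely. Everything follows from the $L^{2}$-orthogonal decomposition $\mathbb{R}^{n}=\Range(S)\oplus\Range(R^{T})$ already used repeatedly in Section~\ref{sec:Cha}, together with the SPD property of $S^{T}AS$ and $RA^{-1}R^{T}$.
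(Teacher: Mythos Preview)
Your proof is correct. It differs in approach from the paper's: the paper stacks the two constraints $S^{T}AP_{\star}=0$ and $RP_{\star}=I_{n_{c}}$ into a single linear system
\[
\begin{pmatrix} S^{T}A \\ R \end{pmatrix} P_{\star} = \begin{pmatrix} 0 \\ I_{n_{c}} \end{pmatrix},
\]
computes the inverse of the coefficient matrix explicitly as $\bigl(S(S^{T}AS)^{-1} \ \ A^{-1}R^{T}(RA^{-1}R^{T})^{-1}\bigr)$, and reads off~\eqref{new_exp} from the second block column. This derives the formula and gives uniqueness simultaneously via invertibility of the coefficient matrix. You instead \emph{verify} the candidate directly---checking $RP_{\star}=I_{n_{c}}$ and $P_{\star}^{T}AS=0$ by one-line computations---and prove uniqueness separately via the subtraction argument $P_{1}-P_{2}=SY$, $Y=0$. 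Your route is slightly more elementary (no block inverse needed) and makes the role of each constraint transparent; the paper's route has the advantage of \emph{discovering} the formula rather than requiring it to be guessed in advance.
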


\begin{proof}
Due to $S^{T}AP_{\star}=0$ and $RP_{\star}=I_{n_{c}}$, it follows that
\begin{displaymath}
\begin{pmatrix}
S^{T}A\\
R
\end{pmatrix}P_{\star}
=\begin{pmatrix}
0\\
I_{n_{c}}
\end{pmatrix}.
\end{displaymath}
It is easy to check that
\begin{displaymath}
\begin{pmatrix}
S^{T}A\\
R
\end{pmatrix}^{-1}=\begin{pmatrix}
S(S^{T}AS)^{-1} & A^{-1}R^{T}(RA^{-1}R^{T})^{-1}
\end{pmatrix}.
\end{displaymath}
Consequently, we arrive at
\begin{displaymath}
P_{\star}=A^{-1}R^{T}(RA^{-1}R^{T})^{-1},
\end{displaymath}
which completes the proof.
\end{proof}

In view of the expression~\eqref{new_exp}, one needs only $A$ and $R$ to compute the ideal interpolation in $\mathbb{P}_{0}$. Using~\eqref{new_exp}, we can derive the same results as in~\cite[Corollaries 3.4 and 3.5]{Falgout2004}.

Let $Q_{\star}=P_{\star}R$, where $P_{\star}$ is given by~\eqref{new_exp}. We then have
\begin{displaymath}
Q_{\star}=A^{-1}R^{T}(RA^{-1}R^{T})^{-1}R,
\end{displaymath}
which is an $A$-orthogonal projection onto $\Range(A^{-1}R^{T})$. Hence, the following corollary holds.

\begin{corollary}
Let $Q_{\star}=P_{\star}R$, where $P_{\star}$ is given by~\eqref{new_exp}. Then
\begin{displaymath}
\|Q\|_{A}\geq\|Q_{\star}\|_{A}=1 \quad \text{and} \quad \|I-Q\|_{A}\geq\|I-Q_{\star}\|_{A}=1.
\end{displaymath}
\end{corollary}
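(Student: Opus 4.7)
The plan is to first identify $Q_{\star}$ as the $A$-orthogonal projection onto $\Range(A^{-1}R^{T})$, from which the two equalities $\|Q_{\star}\|_{A}=\|I-Q_{\star}\|_{A}=1$ follow immediately, and then to invoke the classical fact that any nontrivial idempotent has induced norm at least one to obtain the two inequalities.

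First, I would verify from~\eqref{new_exp} that $RP_{\star}=RA^{-1}R^{T}(RA^{-1}R^{T})^{-1}=I_{n_{c}}$, so that $Q_{\star}=P_{\star}R$ is indeed a projection. Next, I would compute $AQ_{\star}=R^{T}(RA^{-1}R^{T})^{-1}R$ and observe that it is manifestly symmetric; this is equivalent to saying that $Q_{\star}$ is self-adjoint in the $A$-inner product, and hence $Q_{\star}$ is the $A$-orthogonal projection onto $\Range(P_{\star})=\Range(A^{-1}R^{T})$. Consequently $I-Q_{\star}$ is likewise $A$-orthogonal, onto the $A$-orthogonal complement of $\Range(A^{-1}R^{T})$. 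Since any nonzero orthogonal projection (in any inner product) has operator norm equal to one, and both $Q_{\star}$ and $I-Q_{\star}$ are nontrivial because $0<n_{c}<n$, we obtain $\|Q_{\star}\|_{A}=\|I-Q_{\star}\|_{A}=1$.

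For the two inequalities, I would use a standard projection argument. Given any $Q=PR$ with $RP=I_{n_{c}}$, one checks $Q^{2}=P(RP)R=PR=Q$, so $Q$ is idempotent. Picking any nonzero $\mathbf{v}\in\Range(P)$ (which exists since $n_{c}\geq 1$), we have $Q\mathbf{v}=\mathbf{v}$ and therefore $\|Q\|_{A}\geq\|Q\mathbf{v}\|_{A}/\|\mathbf{v}\|_{A}=1$. Applying the same reasoning to the complementary idempotent $I-Q$, whose range contains $\Range(S)=\Null(R)$, and choosing any nonzero $\mathbf{w}\in\Range(S)$, yields $(I-Q)\mathbf{w}=\mathbf{w}$ and hence $\|I-Q\|_{A}\geq 1$.

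I do not foresee any real obstacle: the central observation is the symmetry of $AQ_{\star}$, which is visible by inspection of~\eqref{new_exp}, and the remaining arguments are textbook facts about projections in an inner-product space. The only thing that warrants mild care is recording the non-triviality of the ranges (so that one may actually exhibit the vectors $\mathbf{v}$ and $\mathbf{w}$ above), which follows directly from the dimension assumptions implicit in condition $(\mathbf{C})$.
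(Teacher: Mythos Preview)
Your proposal is correct and follows essentially the same route as the paper: the paper's entire argument is the one-line observation (stated just before the corollary) that $Q_{\star}=A^{-1}R^{T}(RA^{-1}R^{T})^{-1}R$ is an $A$-orthogonal projection onto $\Range(A^{-1}R^{T})$, from which the corollary is declared to follow immediately. You simply spell out the standard projection facts that the paper leaves implicit.
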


\begin{remark}\label{equiv_P0}\rm
We remark that $P^{T}AS=0$ is equivalent to $R=(P^{T}AP)^{-1}P^{T}A$. In fact, if $P^{T}AS=0$, then
\begin{displaymath}
\Range(AP)=\Range(R^{T}), 
\end{displaymath}
since $RS=0$ and $\rank(AP)=\rank(R^{T})$. Hence, there exists a nonsingular matrix $Z\in\mathbb{R}^{n_{c}\times n_{c}}$ such that $R=ZP^{T}A$.
Using $RP=I_{n_{c}}$, we obtain that $Z=(P^{T}AP)^{-1}$. Thus,  
\begin{displaymath}
R=(P^{T}AP)^{-1}P^{T}A.
\end{displaymath}
Conversely, if $R=(P^{T}AP)^{-1}P^{T}A$, we deduce from $RS=0$ that $P^{T}AS=0$. In addition, we can easily see that $P^{T}AS=0$ is also equivalent to $\Range(AP)=\Range(R^{T})$. As a result, we get an equivalent expression for $\mathbb{P}_{0}$, i.e.,
\begin{displaymath}
\mathbb{P}_{0}=\Big\{P:\Range(AP)=\Range(R^{T})\Big\},
\end{displaymath}
which does not involve the auxiliary operator $S$ as well.
\end{remark}

In view of Remark~\ref{equiv_P0}, if $P_{\star}\in\mathbb{P}_{0}$, we have 
\begin{displaymath}
RA^{-1}=(P_{\star}^{T}AP_{\star})^{-1}P_{\star}^{T}.
\end{displaymath}
By $P_{\star}^{T}R^{T}=I_{n_{c}}$, we have
\begin{displaymath}
P_{\star}^{T}AP_{\star}=(RA^{-1}R^{T})^{-1},
\end{displaymath}
which leads to
\begin{displaymath}
\begin{pmatrix}
S^{T}\\
P_{\star}^{T}
\end{pmatrix}
AP_{\star}=\begin{pmatrix}
0\\
(RA^{-1}R^{T})^{-1}
\end{pmatrix}.
\end{displaymath}
Using~\eqref{inv_SP}, we obtain
\begin{displaymath}
P_{\star}=A^{-1}\begin{pmatrix}
(I-P_{\star}R)^{T}AS(S^{T}AS)^{-1} & R^{T}
\end{pmatrix}\begin{pmatrix}
0\\
(RA^{-1}R^{T})^{-1}
\end{pmatrix}=A^{-1}R^{T}(RA^{-1}R^{T})^{-1}.
\end{displaymath}
This serves as an alternative proof of~\eqref{new_exp}.

By recalling the general expression for $P$ in~\eqref{g_exp}, we can write 
\begin{displaymath}
P_{\star}=R^{T}(RR^{T})^{-1}+S(S^{T}AS)^{-1}S^{T}AY_{\star},
\end{displaymath}
where $Y_{\star}\in\mathbb{R}^{n\times n_{c}}$. Using $S^{T}AP_{\star}=0$, we get
\begin{displaymath}
S^{T}AY_{\star}=-S^{T}AR^{T}(RR^{T})^{-1},
\end{displaymath}
which yields
\begin{equation}\label{old_exp1}
P_{\star}=\big(I-S(S^{T}AS)^{-1}S^{T}A\big)R^{T}(RR^{T})^{-1}.
\end{equation}
Note that~\eqref{old_exp1} coincides with~\eqref{new_exp}. In fact, by $RS=0$ and~\eqref{equiv2}, we have
\begin{displaymath}
A^{-1}R^{T}=\big(I-S(S^{T}AS)^{-1}S^{T}A\big)A^{-1}R^{T}=\big(I-S(S^{T}AS)^{-1}S^{T}A\big)R^{T}(RR^{T})^{-1}RA^{-1}R^{T}.
\end{displaymath}
Therefore, we arrive at
\begin{displaymath}
A^{-1}R^{T}(RA^{-1}R^{T})^{-1}=\big(I-S(S^{T}AS)^{-1}S^{T}A\big)R^{T}(RR^{T})^{-1}.
\end{displaymath}
If $RR^{T}=I_{n_{c}}$, then~\eqref{old_exp1} reduces to~\eqref{old_exp}. We mention that a similar expression of~\eqref{old_exp1} has been given in~\cite{Brannick2017}.

\begin{remark}\rm
Since $S$ and $R^{T}$ form an $L^{2}$-orthogonal decomposition of $\mathbb{R}^{n}$, $\Range(S)$ is unique if $R$ is fixed. However, the operator $S$ itself has different choices. It is not very clear to see whether $P_{\star}$ in~\eqref{old_exp1} is independent of the choice of $S$. On the other hand, the new expression~\eqref{new_exp} explicitly shows that $P_{\star}$ is unique (independent of the choice of $S$) as long as $R$ is fixed.
\end{remark} 

Although $\Range(S)\perp\Range(R^{T})$ with respect to $L^{2}$-inner product and $S(S^{T}AS)^{-1}S^{T}A$ is an $L^{2}$-projection onto $\Range(S)$, the equality $S(S^{T}AS)^{-1}S^{T}AR^{T}(RR^{T})^{-1}=0$ does not hold in general (unless $RAS=0$), because the projection $S(S^{T}AS)^{-1}S^{T}A$ is \emph{oblique} with respect to $L^{2}$-inner product. Hence, under the conditions $RAS\neq0$ and $RR^{T}=I_{n_{c}}$, the ideal interpolation in $\mathbb{P}_{0}$ cannot be of the form $P_{\star}=R^{T}$, while the tentative operator $R^{T}$ could be an ideal choice if we seek the ideal interpolation in $\mathbb{P}_{\star}\backslash\mathbb{P}_{0}$ instead (see Example~\ref{count_exam} and Corollary~\ref{R_T}). 

In view of the expression~\eqref{old_exp1}, we observe that the ideal interpolation in $\mathbb{P}_{0}$ is typically dense. In practice, we would like to have a sparse coarse-grid matrix $A_{c}=P^{T}AP$, which imposes the requirement on $P$ to be sparse as well. According to Corollary~\ref{R_T} and Remark~\ref{sparse_P}, we deduce that it is possible to find a sparse ideal interpolation in $\mathbb{P}_{\star}\backslash\mathbb{P}_{0}$.

\section{Conclusions}
\label{sec:Con}

In this paper, we have established sufficient, necessary, and equivalent conditions of the ideal interpolation in AMG methods. Our result suggests that one has more room than $\mathbb{P}_{0}$ to construct an ideal interpolation. Furthermore, we have derived a new expression for the ideal interpolation in $\mathbb{P}_{0}$, which does not involve the operator $S$. Designing new AMG algorithms based on our result is an interesting topic that deserves in-depth study in the future.

\section*{Acknowledgements}

The authors would like to thank the anonymous referees for their valuable comments and suggestions, which greatly improved the original version of this paper. This work was supported by the National Key Research and Development Program of China (Grant No. 2016YFB0201304), the Major Research Plan of National Natural Science Foundation of China (Grant Nos. 91430215, 91530323), and the Key Research Program of Frontier Sciences of CAS. 

\bibliographystyle{abbrv}
\bibliography{references}

\end{document}